\documentclass[12pt, a4paper]{amsart}
\usepackage{amsfonts}
\usepackage{amscd,amsmath}
\usepackage{tikz-cd}
\usepackage{amssymb}
\usepackage{setspace}
\textwidth 400 pt
\newtheorem{theorem}{Theorem}[section]

\newtheorem{corollary}[theorem]{Corollary}
\newtheorem{proposition}[theorem]{Proposition}
\newtheorem{definition}[theorem]{Definition}
\newtheorem{example}[theorem]{Example}

\theoremstyle{remark}
\newtheorem{remark}{Remark}[section]
\newtheorem*{rem*}{Remark}

\newcommand{\comment}[1]{}
\pagestyle{plain} \pdfpagewidth 9in \pdfpageheight 12in

\numberwithin{equation}{section}
\begin{document}

\title{Multiplicative Lie Algebra Structures on a Group}
\author{Mani Shankar Pandey$^1$ and Sumit Kumar Upadhyay$^2$\vspace{.4cm}
}
\address{$^{1,2}$Department of Applied Sciences,\\ Indian Institute of Information Technology, Allahabad}
\thanks {2010 Mathematics Subject classification : 15A75,  19C09, 20F12}
\email{\tiny{$^1$manishankarpandey4@gmail.com, $^2$upadhyaysumit365@gmail.com}}.

\begin{abstract} 
The main aim of this paper is to classify the distinct multiplicative Lie algebra structures (up to isomorphism) on a given group. We also see that for  a given group $G$, every homomorphism from the non abelian exterior square $G \wedge G$ to $G$ gives a multiplicative Lie algebra structure on $G$ under certain conditions. 
\end{abstract}
\maketitle
\small{\textbf{Keywords}:  Multiplicative Lie Algebra, Lie Simple, Schur Multiplier, Commutator, Exterior square.}

\section{Introduction}
In a group $(G,\cdot)$, the following commutator identities holds universally: 
\begin{align*}
\\&[a, a] = 1,\\&
[a,bc] = [a,b]^b[a,c],\\&
 [ab,c] = ^a[b,c][a,c],\\&
 [[a,b],^b c] [[b,c],^c a][c,a],^ab] = 1,\\&
 ^c[a,b]= [^ca,^cb],
\end{align*}
for every $a,b,c\in G$. In 1993, G. J. Ellis \cite{GJ} conjectured that these five universal identities generate all universal identities between commutators of weight $n$. For  $n=2$ and $3$, he gave an affirmative answer of this conjecture using homological techniques and introduced a new algebraic concept ``multiplicative Lie algebra". More precisely a multiplicative Lie algebra is a group $(G,\cdot)$ together with an extra binary operation $*$, termed as multiplicative Lie algebra structure satisfies identities similar to all five universal commutator identities.

For a given group $(G,\cdot)$, there is always a trivial multiplicative Lie algebra structure $``*"$ given as $a*b=1$, for all $a,b \in G$. In this case, $G$ is known as the trivial multiplicative Lie algebra. On the other hand if we consider a non-abelian group $G$, then there is atleast two distinct multiplicative Lie algebra structures on $G$, one is trivial and other one is given by the commutator, that is, $a*b=[a,b]$, for all $a,b \in G$. In this manner, it is interesting to know that how many distinct multiplicative Lie algebra structure (up to isomorphism) can exist on a group.

Let $G={\mathbb{Z}_p\times \mathbb{Z}_p \times \cdots \times \mathbb{Z}_p} ~(n-\text{copies})$ be an abelian group of order $p^n$, where $p$ is a prime. Since $G$ is a vector space over $\mathbb{Z}_p$, the multiplicative Lie algebra structure on $G$ is same as the Lie algebra structure on $G$. Thus for such type of abelian groups, the number of distinct multiplicative Lie algebra structures is same as number of distinct Lie algebra structures on $G$. In particular for $n=2$,  $G=\mathbb{Z}_p\times \mathbb{Z}_p=\langle a,b| a^p=b^p=1,ab=ba \rangle$ has two distinct multiplicative Lie algebra structures given by $a*b=1$ and $a*b=a$.   

We call a multiplicative Lie algebra $(G,\cdot, *)$ is Lie simple if $*$ is not given by $a*b=1$ or $a*b=[a,b]$. In this direction, we have following results from \cite{RLS}:
 \begin{proposition} \label{Lie simple 1} 
Every simple group is Lie simple.
\end{proposition}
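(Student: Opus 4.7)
The plan is to derive Lie simplicity of the multiplicative Lie algebra $(G,\cdot,*)$ directly from the group-theoretic simplicity of $G$, using the fact that every Lie ideal of $(G,\cdot,*)$ is in particular a normal subgroup of the underlying group. Recall that a \emph{Lie ideal} of $(G,\cdot,*)$ is a normal subgroup $I\trianglelefteq G$ closed under the bracket with the whole group on either side, i.e., satisfying $I*G\subseteq I$ and $G*I\subseteq I$; accordingly the poset of Lie ideals of $(G,\cdot,*)$ embeds into the poset of normal subgroups of $(G,\cdot)$.

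The argument is then an immediate reduction. Let $(G,\cdot,*)$ be any MLA whose underlying group $G$ is simple, and let $I$ be a Lie ideal of it. Since $I$ is in particular a normal subgroup of the simple group $G$, we must have $I=\{1\}$ or $I=G$. Hence $(G,\cdot,*)$ admits no proper nontrivial Lie ideals and is therefore Lie simple. Notably the conclusion does not depend on the particular bracket $*$ placed on $G$, which matches the phrasing of the proposition, where the hypothesis refers only to the underlying group.

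The only delicate point, which I would pin down before writing up, is the precise definition of Lie ideal in force; once that is fixed, the argument uses nothing beyond simplicity of $G$. In particular, no structural machinery such as the non-abelian exterior square $G\wedge G$ or the Schur multiplier $M(G)$ is needed here: these heavier tools are reserved, as advertised in the abstract, for the subsequent and finer classification of MLA structures (up to isomorphism) on groups that are not themselves simple. Consequently, I do not anticipate any real obstacle in the proof beyond this definitional bookkeeping.
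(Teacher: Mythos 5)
Your argument proves a different statement, because it rests on a misreading of ``Lie simple''. In this paper (following \cite{RLS}, from which Propositions \ref{Lie simple 1}--\ref{Equivariant homomorphism} are quoted without proof) a group $G$ is Lie simple when the \emph{only} multiplicative Lie algebra structures it admits are the trivial one $a*b=1$ and the commutator one $a*b=[a,b]$; it is a statement about which brackets $*$ can exist on $G$, not about the ideal lattice of a fixed $(G,\cdot,*)$. The surrounding results make this unambiguous: Proposition \ref{Lie simple 2} asserts that every cyclic group is Lie simple, yet $\mathbb{Z}_6$ with its unique (trivial) bracket has the proper nontrivial ideal $\mathbb{Z}_3$, so under your reading that proposition would be false; likewise $D_3$ is called Lie simple while $D_4$ is not, precisely because $D_4$ carries a third structure $a*b=b$. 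Under your reading the present proposition would also be essentially vacuous, whereas its actual content is substantive: a simple group admits no exotic bracket.

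What has to be shown is therefore: if $G$ is simple and $*$ is \emph{any} multiplicative Lie algebra structure on $G$, then $*$ is the trivial or the commutator structure. Your observation that ideals are normal subgroups is only the first step --- $G*G$ is an ideal, hence equals $\{1\}$ (giving the trivial structure) or $G$ --- but knowing $G*G=G$ does not by itself identify $a*b$ with $[a,b]$. The missing ingredient, supplied in \cite{RLS}, is a genuine structural fact about multiplicative Lie algebras: the deviation $(a,b)\mapsto (a*b)[b,a]$ defines a central-valued pairing that factors through the abelianization $G^{ab}$, so on a nonabelian simple (hence perfect) group it vanishes and forces $a*b=[a,b]$, while the abelian simple groups $\mathbb{Z}_p$ are cyclic and carry only the trivial bracket. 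None of this appears in your write-up, so the proposal does not establish the proposition; since the paper itself merely cites \cite{RLS} here, the comparison is really with that source, and your reduction to normality of ideals does not reach it.
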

\begin{proposition} \label{Lie simple 2}
Every cyclic group is Lie simple. Evidently, an abelian group is Lie simple if and only if it is cyclic.
\end{proposition}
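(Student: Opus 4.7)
The plan is to establish the two halves of Proposition~\ref{Lie simple 2} by classifying the multiplicative Lie algebra structures available on an abelian group.

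First I would show that every multiplicative Lie algebra structure $*$ on a cyclic group $G=\langle g\rangle$ must be trivial. The first universal identity forces $g*g=1$; and since the conjugation terms in identities (2) and (3) vanish in the abelian setting, those identities reduce to genuine $\mathbb{Z}$-bilinearity of $*$. A short induction on exponents then yields $g^i*g^j=(g*g)^{ij}=1$ for all integers $i,j$, so $*$ is the trivial structure, which in an abelian group coincides with the commutator structure. In particular no Lie simple structure can be defined on $G$, giving the first assertion.

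For the converse I would construct a Lie simple structure on any non-cyclic abelian group $G$; since the commutator vanishes on an abelian group, it suffices to exhibit a non-trivial multiplicative Lie algebra structure. Using the structure theorem for finitely generated abelian groups, non-cyclicity of $G$ guarantees a decomposition containing two cyclic summands $\langle a\rangle$ and $\langle b\rangle$ whose orders share a common divisor $d>1$. Choosing $c\in\langle a\rangle$ of order dividing $d$, I would define $*$ on generators by $a*b=c$, $b*a=c^{-1}$, $x*x=1$, and $x*y=1$ on every remaining generator pair, then extend $\mathbb{Z}$-bilinearly to all of $G$. The choice of $c$ makes the defining equalities compatible with the order relations in $G$, and in the abelian setting the four non-Jacobi identities are immediate.

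The main obstacle is verifying the Jacobi identity. I would check it by direct expansion, following the template of the $\mathbb{Z}_p\times\mathbb{Z}_p$ example from the introduction: on $\langle a\rangle\oplus\langle b\rangle$ the triple bracket expands to a Leibniz-type polynomial in the coordinates that telescopes to zero, while any triple $(x,y,z)$ with one entry in the complementary summand $H$ collapses because $g*h=1$ for all $g\in G$ and $h\in H$, making every two-fold bracket vanish. The resulting $*$ is a valid multiplicative Lie algebra structure with $a*b=c\neq 1=[a,b]$, hence Lie simple, so $G$ is not Lie simple. Combining the two halves yields the biconditional.
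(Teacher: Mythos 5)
The paper gives no proof of this proposition (it is imported from \cite{RLS}), so your argument has to stand on its own; note that the paper's later machinery recovers both halves by a different, more homological route: the remark following Theorem~\ref{Wedge} derives the cyclic case from the vanishing of $G\wedge G$, and the theorem counting $\tau(d)$ structures on $\mathbb{Z}_m\times\mathbb{Z}_n$ supplies the extra structures on non-cyclic groups of that form. Your first half is correct and essentially optimal: in an abelian group the conjugations in axioms (2) and (3) are trivial, so $*$ is an alternating biadditive map, and $g^i*g^j=(g*g)^{ij}=1$ disposes of the cyclic case. Your plan for verifying the Jacobi identity on the constructed bracket is also sound: by multilinearity it reduces to generator triples, which either telescope on $\langle a\rangle\oplus\langle b\rangle$ or die because the image of $*$ lies in $\langle a\rangle\oplus\langle b\rangle$ and pairs trivially with the complement.

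The genuine gap is in the converse. Invoking the structure theorem for finitely generated abelian groups silently shrinks the claim: the proposition is stated for arbitrary abelian groups, and there the biconditional is in fact false, so no repair of this step is possible. For example $\mathbb{Q}$ and the Pr\"ufer groups $\mathbb{Z}(p^{\infty})$ are non-cyclic, yet any two of their elements lie in a common cyclic subgroup, so every alternating biadditive map (equivalently, every homomorphism out of the exterior square, which vanishes for locally cyclic groups) is trivial; these groups therefore admit only the trivial structure and are Lie simple. Your argument proves the finitely generated case, and you should state that hypothesis explicitly (the correct general statement is ``Lie simple if and only if locally cyclic''). A second, smaller defect: the prescription ``choose $c\in\langle a\rangle$ of order dividing $d$'' does not always yield a nontrivial element. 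For $G=\mathbb{Z}\times\mathbb{Z}_n$ with $a$ the infinite generator, well-definedness forces $c^{n}=(a*b)^{n}=a*b^{n}=1$, and the only such $c$ in $\langle a\rangle\cong\mathbb{Z}$ is the identity; you must instead take $c$ in a finite summand (e.g.\ $c=b$), or more uniformly take $c$ to be any nontrivial element killed by both $|a|$ and $|b|$. With those two repairs the construction does produce a structure with $a*b\neq 1=[a,b]$ and the finitely generated biconditional follows.
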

\begin{proposition} \label{Lie simple 3}
Every free group is Lie simple.
\end{proposition}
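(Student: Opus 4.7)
The plan is to parametrize all multiplicative Lie algebra structures on a free group $F$ by homomorphisms out of the non-abelian exterior square, in the spirit of the correspondence the authors indicate in their abstract. Given such a structure $*$, the five axioms ensure that $a\wedge b\mapsto a*b$ defines an $F$-equivariant group homomorphism $\phi\colon F\wedge F\to F$. Because the Schur multiplier $H_2(F,\mathbb{Z})$ of any free group vanishes, the commutator map $F\wedge F\to[F,F]$, $a\wedge b\mapsto[a,b]$, is an isomorphism, so $\phi$ corresponds to an $F$-equivariant homomorphism $\tilde\phi\colon[F,F]\to F$ with $a*b=\tilde\phi([a,b])$. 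The task thus becomes showing that $\tilde\phi$ is either the zero map or the inclusion $[F,F]\hookrightarrow F$; the case $F\cong\mathbb{Z}$ is trivial since then $[F,F]=1$.

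Assume $F$ has rank at least $2$, and take $x,y$ in a free basis. I would first constrain $w:=\tilde\phi([x,y])$: by $F$-equivariance every $g\in C_F([x,y])$ satisfies ${}^g w=w$, so $w$ centralizes $[x,y]$. Since $F$ is free and $[x,y]$ is primitive, $C_F([x,y])=\langle[x,y]\rangle$, forcing $w=[x,y]^n$ for some $n\in\mathbb{Z}$. To pin down $n$, I would apply $\tilde\phi$ to the universal commutator identity $[x,y^2]=[x,y]\cdot{}^y[x,y]$: the homomorphism property yields $\tilde\phi([x,y^2])=[x,y]^n\cdot({}^y[x,y])^n$, while the same centralizer argument applied to the primitive element $[x,y^2]$ gives $\tilde\phi([x,y^2])=([x,y]\cdot{}^y[x,y])^m$ for some $m\in\mathbb{Z}$. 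Since $[x,y]$ and ${}^y[x,y]$ generate a rank-$2$ free subgroup of $F$, the resulting identity $(ab)^m=a^nb^n$ in a rank-$2$ free group forces $m=n\in\{0,1\}$.

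For $F$ of rank greater than $2$, running this argument on every pair of basis elements assigns $n\in\{0,1\}$ to each pair, and mixed identities such as $[x,yz]=[x,z]\cdot{}^z[x,y]$ (applied together with the centralizer argument for $[x,yz]$) force all these integers to coincide. Hence either $n=0$ throughout and $a*b=1$ identically, or $n=1$ throughout and $a*b=[a,b]$ identically.

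The principal obstacle I foresee is verifying that the prescription $\tilde\phi({}^g[x,y])={}^g[x,y]^n$ actually extends to a well-defined group homomorphism on the full commutator subgroup $[F,F]$, not just to a consistent assignment on the $F$-conjugates of $[x,y]$. The group $[F,F]$ is free of infinite rank, and elements written as products of $F$-conjugates of $[x,y]$ can satisfy non-trivial relations that the proposed $\tilde\phi$ must respect. I expect this to reduce to the fact that, in a free group, the only relations among $F$-conjugates of a primitive element come from its cyclic centralizer --- exactly what the previous step already handles. The Hall--Witt/Jacobi axiom for $*$ is automatic, since the corresponding identity is a universal commutator identity that holds in every group.
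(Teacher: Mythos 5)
The paper never proves this proposition---it is quoted from \cite{RLS} without proof---so there is no in-paper argument to compare against; the machinery you invoke is exactly the paper's Proposition~\ref{Equivariant homomorphism} (equivalently, the $M(G)=\{1\}$ specialization of Theorem~\ref{Wedge}). Your core computation is sound: since $M(F)$ is trivial, a structure $*$ yields an $F$-equivariant homomorphism $\tilde\phi\colon [F,F]\to F$ with $a*b=\tilde\phi([a,b])$; equivariance under $C_F([x,y])$ forces $\tilde\phi([x,y])$ to commute with $[x,y]$ and hence to equal $[x,y]^n$; comparing the two evaluations of $\tilde\phi([x,y^2])$ inside the rank-$2$ free subgroup $\langle [x,y],\,{}^y[x,y]\rangle$ forces $n\in\{0,1\}$; and the mixed identities propagate a single $n$ across all pairs of basis elements. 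One correction of terminology: $[x,y]$ is \emph{not} a primitive element of $F$ (it dies in the abelianization). The property you actually need is that a nontrivial commutator in a free group is never a proper power (Sch\"utzenberger; for $[x,y]$, $[x,y^2]$, $[x,yz]$ a direct cyclically-reduced-word count also suffices), which is what gives $C_F(u)=\langle u\rangle$ and hence that anything commuting with $u$ lies in $\langle u\rangle$.

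The ``principal obstacle'' you flag at the end is not an obstacle, and the worry indicates the logic has been turned around. You are not trying to \emph{extend} a partially defined assignment to a homomorphism on $[F,F]$; the homomorphism $\tilde\phi$ already exists, having come from the given structure $*$, and you have computed its values on the set of all $F$-conjugates of basic commutators: $\tilde\phi({}^g[x_i,x_j])={}^g\bigl([x_i,x_j]^n\bigr)=\bigl({}^g[x_i,x_j]\bigr)^n$ with the same $n\in\{0,1\}$ for every pair. Since $[F,F]$ is the normal closure of the basic commutators, this set generates $[F,F]$, and two homomorphisms agreeing on a generating set coincide. Hence $\tilde\phi$ is the trivial map ($n=0$) or the inclusion $[F,F]\hookrightarrow F$ ($n=1$), i.e.\ $a*b=1$ identically or $a*b=[a,b]$ identically. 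No well-definedness or relation-checking is needed, and the closing remark about the Jacobi identity being automatic concerns the converse direction (building a structure from a homomorphism), which this proof does not require. With that last step stated as an identification rather than an extension, the argument is complete.
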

\begin{proposition} \label{Equivariant homomorphism}
Let $G$ be a group with trivial Schur multiplier. Then any multiplicative Lie algebra structure on $G$ is uniquely determined by a $G-$equivariant homomorphism from $[G,G]$ to $G$.
\end{proposition}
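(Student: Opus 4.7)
The plan is to exploit the fundamental exact sequence
\[
1 \longrightarrow H_2(G) \longrightarrow G \wedge G \xrightarrow{\ \kappa\ } [G,G] \longrightarrow 1,
\]
where $\kappa(g \wedge h) = [g,h]$ and $H_2(G)$ is the Schur multiplier. Under the hypothesis $H_2(G) = 1$, the commutator map $\kappa$ is an isomorphism of $G$-groups, so a $G$-equivariant homomorphism $G \wedge G \to G$ is the same datum as a $G$-equivariant homomorphism $[G,G] \to G$. Hence everything reduces to showing that multiplicative Lie algebra structures on $G$ are in bijection with $G$-equivariant homomorphisms $\phi : G \wedge G \to G$.

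First I would build the forward direction. Given a multiplicative Lie algebra structure $*$ on $G$, I would define $\phi : G \wedge G \to G$ on generators by $\phi(g \wedge h) = g * h$, and verify well-definedness by checking that the defining relations of the nonabelian exterior square, namely
\[
(gg') \wedge h \;=\; (^g g' \wedge {}^g h)(g \wedge h), \qquad g \wedge (hh') \;=\; (g \wedge h)({}^h g \wedge {}^h h'), \qquad g \wedge g = 1,
\]
are satisfied after replacing $\wedge$ by $*$. This is exactly the content of the five universal identities listed in the introduction, so the verification is term by term. Composing $\phi$ with $\kappa^{-1} : [G,G] \xrightarrow{\sim} G \wedge G$ produces a $G$-equivariant homomorphism $\psi : [G,G] \to G$ with $\psi([a,b]) = a*b$.

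For the converse direction, given a $G$-equivariant homomorphism $\psi : [G,G] \to G$, I would define $a * b := \psi([a,b])$ and check that each of the five multiplicative Lie algebra axioms holds; each one reduces to applying $\psi$ to the corresponding universal commutator identity in $G$, using that $\psi$ is a homomorphism and $G$-equivariant (for the axioms involving conjugation). Uniqueness is automatic from $a*b = \psi([a,b])$: two structures yielding the same $\psi$ must agree on all pairs, and conversely two different $*$-structures produce different $\psi$ since $\psi$ is defined directly from $*$ via $\phi \circ \kappa^{-1}$.

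The main obstacle I anticipate is the forward direction: one must verify that the assignment $g \wedge h \mapsto g*b$ respects every defining relation of $G \wedge G$, and the list of relations one uses depends on the chosen presentation of the nonabelian exterior square. The cleanest route is to work with the presentation whose relators are precisely the five universal commutator identities; then checking well-definedness is nothing but invoking the corresponding axiom of the multiplicative Lie algebra. The role of the Schur multiplier hypothesis is only to guarantee that the map $\phi : G \wedge G \to G$ descends canonically to $[G,G]$; without $H_2(G) = 1$ one would merely obtain a $G$-equivariant map on $G \wedge G$, not on $[G,G]$, which is why the statement is phrased in terms of the commutator subgroup.
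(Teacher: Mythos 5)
Your overall route matches the paper's: the paper obtains this proposition as the special case of its Theorem 3.1 (a multiplicative Lie algebra structure is the same datum as a homomorphism $\phi\colon G\wedge G\to G$ that is equivariant and kills the normal subgroup $J$ generated by the Jacobi-type elements), observing in the subsequent remark that when $M(G)=1$ the sequence $1\to M(G)\to G\wedge G\to [G,G]\to 1$ identifies $G\wedge G$ with $[G,G]$ and makes the condition on $J$ vacuous. Two steps in your sketch need repair. The lesser one is in the forward direction: the defining relators of $G\wedge G$ are only the four relations $a\wedge a=1$, $(a\wedge b)(b\wedge a)=1$, $ab\wedge c=({}^ab\wedge{}^ac)(a\wedge c)$ and $a\wedge bc=(a\wedge b)({}^ba\wedge{}^bc)$; there is no Jacobi relator in the presentation of the exterior square, so ``the presentation whose relators are precisely the five universal commutator identities'' is not the exterior square. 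Well-definedness of $\phi(g\wedge h)=g*h$ uses only axioms (1), (2), (3) and (5), and that is all you need there.

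The genuine gap is in the converse. The Jacobi axiom for $a*b:=\psi([a,b])$ does \emph{not} reduce to ``applying $\psi$ to the corresponding universal commutator identity'': unwinding the definition gives $(a*b)*{}^bc=\psi\bigl([\psi([a,b]),{}^bc]\bigr)$, which contains a nested $\psi$, so the Hall--Witt identity cannot be invoked directly. You must first prove that a $G$-equivariant homomorphism $\psi\colon [G,G]\to G$ satisfies $[\psi(u),v]=\psi([u,v])$ for all $u\in[G,G]$ and $v\in G$ (write $[u,v]=u\cdot{}^v(u^{-1})$ and use multiplicativity together with equivariance). Only with this lemma does the Jacobi product become $\psi\bigl(\psi([[a,b],{}^bc]\,[[b,c],{}^ca]\,[[c,a],{}^ab])\bigr)=\psi(\psi(1))=1$. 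This identity is precisely what the paper imports from Proposition 2.6 of its reference on multiplicative Lie algebras and the Schur multiplier, and it is the real point where equivariance is used beyond verifying axiom (5). With that lemma inserted, your argument closes and coincides with the paper's.
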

In this paper, we prove that every multiplicative Lie algebra structure $(*)$ on a group $G$ can be uniquely determined by a quotient group of the exterior square $G\wedge G$ of $G$ under certain condition. We also determine the number of distinct (upto isomorphism) multiplicative Lie algebra structures on some finite groups like $\text{Sym}(n)$, $A_n$, $D_n$ and $Q_{n}$ etc.

Throughout the paper, we say that two multiplicative Lie algebra homomorphism $\phi_1,\phi_2:G_1\to G_2$ are distinct if ideals $\phi_1(G_1)$ and $\phi_2(G_1)$ are not isomorphic and also we say that two multiplicative Lie algebra structures $(*_1)$ and $(*_2)$ are distinct on a group $G$ if $G *_1 G$ and $G *_2 G$ are not isomorphic.
\section{Preliminary results}
In this section, we recall some basic definitions and results related to Schur multiplier which is essential for this paper. For $x, y \in G$, we will denote the commutator $xyx^{-1}y^{-1}$ by $[x, y]$ and the conjugate $yxy^{-1}$ by $^y x$.
\begin{definition} \cite{GJ}\label{MLA}
A multiplicative Lie algebra is a triple $(G, \cdot, *)$, where $(G, \cdot)$ is a group and $*$ is a binary operation on $G$ satisfying 
\begin{enumerate}
\item $a* a = 1$
\item $a*(b\cdot c) = (a*b)\cdot ^b(a*c)$
\item $(a\cdot b)*c = ^a(b*c)\cdot (a*c)$
\item $((a*b)*^b c)\cdot ((b*c)*^c a)\cdot ((c*a)*^ab) = 1$ (Jacobi identity)
\item $^c(a*b)= ^ca*^cb$
\end{enumerate}
for all $a,b,c \in G$.

The ideal generated by $\{a* b ~\mid ~ a, b \in G\}$ is denoted by $G* G$.
\end{definition}
\begin{definition}
Let $(G,\cdot,*)$ be a multiplicative Lie algebra.
\begin{enumerate}
\item A subgroup $H$ of $G$ is said to be subalgebra of G if $x*y \in H$ for all $x,y \in H$.
\item A subalgebra $H$ of $G$ is said to be an ideal of $G$ if it is a normal subgroup of $G$ and $x*y \in H$ for all $x \in G$ and $y \in H$. The ideal generated by $\{a* b ~\mid ~ a, b \in G\}$ is denoted by $G* G$.
\item Let $(G',\circ , *')$ be another multiplicative Lie algebra. A group homomorphism $\chi : G \longrightarrow G'$ is called a multiplicative Lie algebra homomorphism if $\chi(x*y) = \chi(x) *' \chi(y)$ for all $x, y \in G$.

\end{enumerate}
\end{definition}
\begin{definition}
Let $(G,\cdot)$ be a group. Then the non-abelian exterior square $G\wedge G$ of $G$ is a group generated by the elements of the set $\lbrace a\wedge b:~a,b \in G\rbrace$ satisfying the following conditions:
\begin{enumerate}
\item $a\wedge a=1$
\item $(a\wedge b)(b\wedge a)=1$
\item $ab\wedge c=(^ab\wedge ^ac )(a\wedge c)$
\item $a\wedge bc=(a \wedge b)(^ba \wedge ^bc)$
\end{enumerate}
for all $a,b,c \in G$. 
\end{definition}
\begin{theorem} \cite{RDL} \label{Tensor product of $D_n$}
Let  $D_n$ be the dihedral group of order $2n$. Then the non-abelian tensor product $D_n \bigotimes D_n$ of $D_n$ is given by
 \[   
D_n \bigotimes D_m = 
     \begin{cases}
       \mathbb{Z}_2 \times \mathbb{Z}_n  &\quad :n \ \text{is odd}\\
       \mathbb{Z}_2 \times \mathbb{Z}_n \times \mathbb{Z}_2 \times \mathbb{Z}_2 &\quad :n \ \text{is even} \\ 
     \end{cases}
     \]
where the factors are generated by $(x \bigotimes x)$ and $(x \bigotimes y)$ for odd $n$ and for even $n$ are generated by $(x \bigotimes x)$, $(x \bigotimes y)$, $(y \bigotimes y)$ and $(x\bigotimes y)(y \bigotimes x)$ respectively.     
\end{theorem}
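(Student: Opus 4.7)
The plan is to exploit the presentation $D_n = \langle x, y \mid x^n = y^2 = 1,\ yxy^{-1} = x^{-1}\rangle$ and to show that $D_n \otimes D_n$ is generated by the four tensors $x \otimes x$, $x \otimes y$, $y \otimes x$, $y \otimes y$ subject to enough relations to identify it with the claimed abelian group; the reverse inclusion is then obtained from the universal property of the non-abelian tensor product via an explicit crossed pairing.

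First I would show that $D_n \otimes D_n$ is abelian. Iterating the defining identities $ab \otimes c = ({}^ab \otimes {}^ac)(a \otimes c)$ and $a \otimes bc = (a \otimes b)({}^ba \otimes {}^bc)$, together with the conjugation rule ${}^yx = x^{-1}$, reduces any commutator of basic tensors to a product of tensors of the form $1 \otimes g$ or $g \otimes 1$, which are trivial. Next I would read off the orders of the four generators: $x^n = 1$ telescopes to $(x \otimes x)^n = 1$; $y^2 = 1$ gives $(x \otimes y)^2 = (y \otimes x)^2 = (y \otimes y)^2 = 1$; and $yxy^{-1} = x^{-1}$ produces a relation linking $x \otimes y$, $y \otimes x$ and $y \otimes y$ which, for odd $n$, collapses $y \otimes y$ to $1$ and forces $x \otimes y = (y \otimes x)^{-1}$ by a short coprimality argument, leaving only two independent factors of orders $n$ and $2$.

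For the reverse direction, I would construct an explicit crossed pairing from $D_n \times D_n$ to the claimed target group, verify the four axioms of the non-abelian tensor product, and use the universal property of $\otimes$ to obtain a surjection onto $D_n \otimes D_n$ which, when composed with the map above, shows that the orders agree and the map is an isomorphism. The main obstacle is the even case, where the tensor $(x \otimes y)(y \otimes x)$ must be shown to give an independent $\mathbb{Z}_2$ factor: rather than checking this by brute manipulation of relations, the cleanest route is via the short exact sequence $1 \to M(D_n) \to D_n \wedge D_n \to [D_n, D_n] \to 1$ together with the standard exact sequence relating the tensor and exterior squares, since this extra $\mathbb{Z}_2$ corresponds exactly to the Schur multiplier $M(D_n) \cong \mathbb{Z}_2$, which is nontrivial precisely when $n$ is even.
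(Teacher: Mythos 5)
The paper offers no proof of this statement: it is imported verbatim from Brown--Johnson--Robertson--Neumann \cite{RDL} as a preliminary result, so your proposal has to stand on its own. Its overall architecture (generators and relations for an upper bound, an explicit crossed pairing plus the universal property for the lower bound, and the sequence $1\to M(D_n)\to D_n\wedge D_n\to [D_n,D_n]\to 1$ to account for the extra $\mathbb{Z}_2$ when $n$ is even) is the right shape for this kind of computation. But the step where you ``read off the orders'' contains a genuine error that derails the identification of the group.

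With your presentation ($x$ the rotation of order $n$, $y$ the reflection), the relation $y^2=1$ does \emph{not} telescope to $(x\otimes y)^2=1$. The defining identity is $a\otimes bc=(a\otimes b)({}^{b}a\otimes {}^{b}c)$, so $1=x\otimes y^2=(x\otimes y)({}^{y}x\otimes y)=(x\otimes y)(x^{-1}\otimes y)$; because ${}^{y}x=x^{-1}\neq x$, this yields only $x^{-1}\otimes y=(x\otimes y)^{-1}$, which is a much weaker relation. In fact $(x\otimes y)^2=1$ is false for $n\ge 3$: the homomorphism $D_n\otimes D_n\to[D_n,D_n]$, $g\otimes h\mapsto[g,h]$, sends $x\otimes y$ to $[x,y]=x^{2}$, an element of order $n$ (odd case) or $n/2$ (even case), so $x\otimes y$ already has order at least that. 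Conversely, $x\otimes x$ (rotation tensor rotation) lies in the kernel of this commutator map and turns out to have order dividing $2$ (it is trivial for odd $n$), so although $(x\otimes x)^n=1$ is correct, this element cannot generate the $\mathbb{Z}_n$ factor. You have therefore swapped the roles of the square tensor and the mixed tensor relative to the statement being proved: in the theorem's notation the $\mathbb{Z}_n$ factor is generated by the \emph{mixed} tensor and the reflection's square tensor gives a $\mathbb{Z}_2$. With your claimed relations the resulting quotient would be an elementary abelian $2$-group extended by one cyclic factor in the wrong place, and the ``short coprimality argument'' for odd $n$ would be operating on the wrong generator. A secondary, lesser issue: the abelianness of $D_n\otimes D_n$ does not follow from merely iterating the two expansion identities; it requires the identity $[g\otimes h,\,g'\otimes h']=[g,h]\otimes[g',h']$ together with a genuine computation showing that tensors of commutators (i.e.\ of even rotations) vanish or are central, which your sketch does not supply.
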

\begin{theorem} \cite{KP} \label{tensor product}
The tensor product of $\mathbb{Z}_m$ and $\mathbb{Z}_n$ is isomorphic to $\mathbb{Z}_{gcd(m,n)}$.
\end{theorem}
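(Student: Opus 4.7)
The plan is to reduce the non-abelian tensor product to the classical tensor product of abelian groups and then perform an order-counting argument on the generator $1\otimes 1$. First, I would observe that since both $\mathbb{Z}_m$ and $\mathbb{Z}_n$ are abelian, the natural conjugation actions are trivial, so the defining relations of the non-abelian tensor product
\begin{align*}
a\otimes bc &= (a\otimes b)({}^b a \otimes {}^b c),\\
ab\otimes c &= ({}^a b \otimes {}^a c)(a\otimes c)
\end{align*}
collapse to ordinary bilinearity
\begin{align*}
a\otimes bc &= (a\otimes b)(a\otimes c),\\
ab\otimes c &= (b\otimes c)(a\otimes c).
\end{align*}
A short calculation with these relations shows that $\mathbb{Z}_m \otimes \mathbb{Z}_n$ is itself abelian, and the construction coincides with the usual $\mathbb{Z}$-linear tensor product of the underlying abelian groups.

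Next, since the tensor product is bilinear and both $\mathbb{Z}_m$ and $\mathbb{Z}_n$ are cyclic, every pure tensor is an integer multiple of $1\otimes 1$, so $\mathbb{Z}_m \otimes \mathbb{Z}_n$ is cyclic with generator $1\otimes 1$. To bound the order of this generator from above, I would use $m(1\otimes 1) = m\otimes 1 = 0$ and $n(1\otimes 1) = 1\otimes n = 0$, which together show that the order of $1\otimes 1$ divides $\gcd(m,n)$.

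For the reverse divisibility, I would construct an explicit bilinear map $\mathbb{Z}_m \times \mathbb{Z}_n \to \mathbb{Z}_d$, where $d=\gcd(m,n)$, given by $(a,b)\mapsto ab \bmod d$. Well-definedness amounts to checking that if $a\equiv a' \pmod m$ and $b\equiv b' \pmod n$, then $ab \equiv a'b' \pmod d$, which follows immediately from the fact that both $m$ and $n$ are multiples of $d$. This bilinear map induces a surjection $\mathbb{Z}_m \otimes \mathbb{Z}_n \twoheadrightarrow \mathbb{Z}_d$ sending $1\otimes 1 \mapsto 1$, so the order of $1\otimes 1$ is at least $d$; combined with the previous divisibility, it is exactly $d$. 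The main obstacle, modest as it is, lies in the first step: verifying rigorously that the non-abelian tensor product collapses to the ordinary one in the abelian case, since the relations involve both sides of conjugation. Once that reduction is in place, the remainder is the classical argument for $\mathbb{Z}_m \otimes_{\mathbb{Z}} \mathbb{Z}_n \cong \mathbb{Z}_{\gcd(m,n)}$.
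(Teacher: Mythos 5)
Your proof is correct. Note that the paper offers no proof of this statement at all -- it is simply quoted from Karpilovsky's book on the Schur multiplier -- so there is no argument of the authors' to compare yours against; what you have written is the standard, self-contained computation of $\mathbb{Z}_m \otimes \mathbb{Z}_n$. Two small comments. First, the reduction from the non-abelian tensor product is a reasonable precaution, but in the one place the paper invokes this theorem (the formula $M(G\times H)\cong M(G)\times M(H)\times (G^{ab}\otimes H^{ab})$ and the computation $G\wedge G\cong M(G)\cong\mathbb{Z}_d$ for $G=\mathbb{Z}_m\times\mathbb{Z}_n$) the tensor product in question is already the ordinary tensor product of abelian groups, so that step is optional; if you keep it, the ``short calculation'' showing the group is abelian deserves to be written out -- expand $ac\otimes bd$ in the two possible orders using the two bilinearity relations and compare, which forces $(a\otimes b)(c\otimes d)=(c\otimes d)(a\otimes b)$. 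Second, in the final step your surjection onto $\mathbb{Z}_d$ actually shows that the order of $1\otimes 1$ is a \emph{multiple} of $d$; saying it is ``at least $d$'' also suffices, since a positive divisor of $d$ that is at least $d$ equals $d$, but the divisibility phrasing is the cleaner way to close the argument.
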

\begin{theorem} \cite{SH}\label{Structure on 8p}
Let $G$ be a non abelian group of order $8p$, where $p$ is an odd prime. Then $G$ is isomorphic to exactly one group of the following types:
\begin{enumerate}
\item $D_4\times \mathbb{Z}_p$ \label{1}
\item $Q_2 \times \mathbb{Z}_p$ \label{2}
\item $D_{2p}\times \mathbb{Z}_2 $\label{3}
\item $Q_{p}\times \mathbb{Z}_2$\label{4}
\item $D_p\times \mathbb{Z}_4$\label{5}
\item $\langle a,b:a^8=b^p=1,~a^{-1}ba=b^{-1}\rangle$ \label{6}
\item $D_{4p}$\label{7}
\item $Q_{2p}$\label{8}
\item $\langle a,b,c:a^4=b^2=c^p=1,~b^{-1}ab=a^{-1},a^{-1}ca=c^{-1},bc=cb \rangle$ \label{9}
\item $\langle a,b:a^8=b^p=1,~a^{-1}ba=b^\alpha \rangle$, where $\alpha$ is a primitive root of $\alpha^4 \equiv 1~\text{mod}(p)$, $4$ divides $p-1$. \label{10}
\item $\langle a,b,c:a^4=b^2=c^p=1,~ab=ba,a^{-1}ca=c^\alpha, bc=cb \rangle$, where $\alpha$ is a primitive root of $\alpha^4 \equiv 1~\text{mod}(p)$, $4$ divides $p-1$.\label{11}
\item $\langle a,b:a^8=b^p=1,~a^{-1}ba=b^\alpha\rangle$, where $\alpha$ is a primitive root of $\alpha^8 \equiv 1~\text{mod}(p)$, $8$ divides $p-1$.\label{12}
\item $\mathbb{Z}_2 \times A_4$\label{13}
\item $\text{SL}(2,3)$\label{14}
\item $\text{Sym}(4)$\label{15}
\item $\langle a,b,c,d:a^4=b^2=c^2=d^p=1,~ab=ba, ac=ca, bc=cb,d^{-1}ad=b, d^{-1}bd=c,d^{-1}cd=ab \rangle $\label{16}
 \end{enumerate} 
\end{theorem}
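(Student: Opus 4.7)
The plan is to reduce the classification to Sylow theory combined with the five isomorphism types of groups of order $8$ and the structure of $\operatorname{Aut}(\mathbb{Z}_p) \cong \mathbb{Z}_{p-1}$. Let $P$ and $Q$ be a Sylow $p$- and a Sylow $2$-subgroup of $G$, so $|P|=p$ and $|Q|=8$. By Sylow's theorems $n_p \mid 8$ with $n_p \equiv 1 \pmod p$, while $n_2 \in \{1,p\}$. For $p \geq 11$ this forces $n_p = 1$, so $P \trianglelefteq G$. For the exceptional primes $p = 3, 5, 7$ the possibility $n_p > 1$ opens up, and it is precisely in these cases that the ``sporadic'' entries $\mathbb{Z}_2 \times A_4$, $\operatorname{SL}(2,3)$, $\operatorname{Sym}(4)$ (all of order $24$) and case $(16)$ have to be placed. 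I would treat these by a direct analysis, counting elements of each order and identifying the Sylow $2$-subgroup in its action on a non-normal $P$.

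In the generic situation, $G \cong \mathbb{Z}_p \rtimes_\varphi Q$ for some $\varphi : Q \to \operatorname{Aut}(\mathbb{Z}_p) \cong \mathbb{Z}_{p-1}$. The next step is a double case-split: first on the isomorphism class of $Q \in \{\mathbb{Z}_8,\, \mathbb{Z}_4 \times \mathbb{Z}_2,\, \mathbb{Z}_2 \times \mathbb{Z}_2 \times \mathbb{Z}_2,\, D_4,\, Q_8\}$, and then on $\varphi$ modulo the action of $\operatorname{Aut}(Q)$, since equivalent homomorphisms give isomorphic semidirect products. Because $\mathbb{Z}_{p-1}$ is cyclic, $\varphi(Q)$ is cyclic of order dividing $\gcd(8, p-1)$. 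The nontrivial possibilities for $|\varphi(Q)|$ are therefore $2$, $4$, or $8$, and the last two require $4 \mid p-1$ and $8 \mid p-1$ respectively, which is precisely where the congruence conditions attached to cases $(10)$--$(12)$ enter. The trivial $\varphi$ produces the direct products $(1)$ and $(2)$, actions of order $2$ through each $Q$ yield $(3)$--$(9)$, and faithful cyclic actions of order $4$ or $8$ yield $(10)$--$(12)$.

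The main obstacle, and the step requiring the most care, is showing simultaneously that every non-abelian group of order $8p$ appears on the list and that no two entries are isomorphic. For distinctness I would invoke invariants such as $|Z(G)|$, $|[G,G]|$, the abelianization $G/[G,G]$, the exponent, and the multiset of element orders, separating the sixteen candidates pairwise. For completeness I would verify that every equivalence class $(Q, [\varphi])$ from the enumeration above matches one of the listed presentations; for example, $\mathbb{Z}_p \rtimes Q_8$ under the inversion action should be identified with $Q_{2p}$ in case $(8)$, and $\mathbb{Z}_p \rtimes D_4$ with $D_{4p}$ in case $(7)$. This final name-matching step must be carried out for each configuration but is largely mechanical once the preceding enumeration is in place.
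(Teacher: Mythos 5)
The paper offers no proof of this statement at all: it is imported verbatim from Miah's classification of groups of order $8q$ (reference [SH]), so there is nothing internal to compare your argument against. Judged on its own, your outline is the standard and correct route, and it does reproduce the list: the trivial action gives $(1)$--$(2)$; the seven $\operatorname{Aut}(Q)$-orbits of surjections $Q\to\{\pm1\}$ (one for $\mathbb{Z}_8$, two for $\mathbb{Z}_4\times\mathbb{Z}_2$, one for $\mathbb{Z}_2^3$, two for $D_4$ distinguished by whether the kernel is the cyclic or a Klein subgroup, one for $Q_2$) give $(3)$--$(9)$; and faithful images of order $4$ or $8$, which force $Q^{\mathrm{ab}}$ to have an element of order $4$ and hence $Q\in\{\mathbb{Z}_8,\mathbb{Z}_4\times\mathbb{Z}_2\}$, give $(10)$--$(12)$. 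Since $P$ is the unique Sylow $p$-subgroup and $\operatorname{Aut}(\mathbb{Z}_p)$ is abelian, the $\operatorname{Aut}(Q)$-orbit criterion is necessary as well as sufficient, so distinctness within a fixed $Q$ comes for free and distinct $Q$'s are separated by the Sylow $2$-subgroup.

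Two points deserve correction or completion. First, $p=5$ is not exceptional: the divisors of $8$ congruent to $1$ modulo $p$ reduce to $\{1\}$ already for every $p\neq 3,7$, so the only primes admitting $n_p>1$ are $p=3$ (with $n_3=4$, yielding $\operatorname{Sym}(4)$, $\operatorname{SL}(2,3)$, $\mathbb{Z}_2\times A_4$) and $p=7$ (with $n_7=8$, where counting the $48$ elements of order $7$ forces a normal Sylow $2$-subgroup $T$ with $7\mid|\operatorname{Aut}(T)|$, hence $T\cong\mathbb{Z}_2^3$ and case $(16)$). Second, the two genuinely laborious steps --- the analysis of these non-normal-Sylow cases and the identification of each orbit with the named presentation (e.g.\ that $\mathbb{Z}_p\rtimes Q_2$ with a cyclic index-$2$ kernel is $Q_{2p}$, and $\mathbb{Z}_p\rtimes D_4$ with cyclic kernel is $D_{4p}$) --- are announced rather than carried out; as a proof they would still need to be written, though no step of the plan would fail.
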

\begin{theorem} \cite{KP} \label{Schur multiplier of p cyclic groups}
Let $G$ be a finite group. If the Sylow p-subgroups of $G$ are cyclic for all $p$ divides order of $G$, then the Schur multiplier $M(G)$ is trivial.
\end{theorem}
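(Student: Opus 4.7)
The plan is to reduce the computation of $M(G)$ to that of its Sylow subgroups via the classical restriction--transfer argument on the second cohomology. For each prime $p$ dividing $|G|$, let $P$ be a Sylow $p$-subgroup of $G$, and consider the restriction map $\operatorname{res}^G_P : M(G) \longrightarrow M(P)$. The composition $\operatorname{cor}^G_P \circ \operatorname{res}^G_P : M(G) \to M(G)$ equals multiplication by the index $[G:P]$, which is coprime to $p$. Hence $\operatorname{res}^G_P$ restricted to the $p$-primary component $M(G)_p$ is injective.

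By hypothesis $P$ is cyclic, so $P \cong \mathbb{Z}/p^k$ for some $k \geq 0$. A direct computation using the standard $2$-periodic free resolution of $\mathbb{Z}$ over the group ring $\mathbb{Z}[\mathbb{Z}/p^k]$ gives $M(P) = H_2(P,\mathbb{Z}) = 0$. Combined with the injectivity above, this forces $M(G)_p = 0$ for every prime $p$ dividing $|G|$. Since $G$ is finite, $M(G)$ is a finite abelian group and therefore decomposes as the direct sum of its $p$-primary components $\bigoplus_p M(G)_p$; as each summand vanishes, we conclude $M(G) = 0$.

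The main obstacle is really a matter of invoking, rather than proving, the cohomological machinery: one must either take on faith or carefully set up the facts that (i) the Schur multiplier is naturally identified with $H_2(G,\mathbb{Z})$ so that restriction and corestriction are defined, and (ii) $\operatorname{cor}\circ\operatorname{res}$ equals multiplication by the index. An alternative route would exploit H\"older's classification of groups with all Sylow subgroups cyclic (the so-called $Z$-groups) as metacyclic groups $\langle a,b \mid a^m=b^n=1,\; bab^{-1}=a^r\rangle$ with $\gcd(m,n(r-1))=1$, and then apply the Lyndon--Hochschild--Serre five-term exact sequence to the extension $1 \to \langle a\rangle \to G \to \langle b\rangle \to 1$; however, this demands explicit computation of coinvariants and feels less uniform than the Sylow-theoretic argument sketched above.
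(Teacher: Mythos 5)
The paper offers no proof of this statement: it is quoted verbatim from Karpilovsky's monograph on the Schur multiplier, so there is no internal argument to compare against. Your restriction--corestriction proof is correct and is in fact the standard one for this classical result: $\operatorname{cor}\circ\operatorname{res}=[G:P]$ forces $\operatorname{res}$ to be injective on the $p$-primary part $M(G)_p$, the $2$-periodic resolution gives $H_2(\mathbb{Z}/p^k,\mathbb{Z})=0$, and the primary decomposition of the finite abelian group $M(G)$ finishes the argument. The only (minor) point worth making explicit is that $M(G)_p$ vanishes automatically for primes $p$ not dividing $|G|$, since $M(G)$ is annihilated by $|G|$; with that noted, the proof is complete modulo the standard cohomological facts you correctly flag as being invoked rather than proved.
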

\begin{theorem} \cite{RL} \label{Schur multiplier 2}
Let $G$ and $H$ be two finite groups. Then the Schur multiplier
\begin{center}
 $M(G \times H)\cong M(G)\times M(H)\times (G^{ab} \bigotimes H^{ab})$,
 \end{center}
\end{theorem}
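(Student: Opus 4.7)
The plan is to identify the Schur multiplier with the second integral homology group, $M(G)\cong H_2(G,\mathbb{Z})$, and then invoke the Künneth formula for group homology. Recall that if $BG$ and $BH$ denote classifying spaces (Eilenberg--MacLane $K(\pi,1)$'s), then $B(G\times H)\simeq BG\times BH$, so a projective $\mathbb{Z}[G\times H]$-resolution of $\mathbb{Z}$ can be built as the tensor product of projective resolutions over $\mathbb{Z}[G]$ and $\mathbb{Z}[H]$. This is precisely the input needed for the algebraic Künneth theorem.

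First I would write down the Künneth short exact sequence in degree $n$:
\[
0 \to \bigoplus_{i+j=n} H_i(G,\mathbb{Z})\otimes H_j(H,\mathbb{Z}) \to H_n(G\times H,\mathbb{Z}) \to \bigoplus_{i+j=n-1} \mathrm{Tor}_1^{\mathbb{Z}}\bigl(H_i(G,\mathbb{Z}),H_j(H,\mathbb{Z})\bigr) \to 0,
\]
which splits (non-naturally) since we are working over the principal ideal domain $\mathbb{Z}$. Specializing to $n=2$ and using the standard identifications $H_0(-,\mathbb{Z})=\mathbb{Z}$, $H_1(-,\mathbb{Z})=(-)^{ab}$, and $H_2(-,\mathbb{Z})=M(-)$, the tensor piece unpacks as
\[
\bigl(\mathbb{Z}\otimes M(H)\bigr)\;\oplus\;\bigl(G^{ab}\otimes H^{ab}\bigr)\;\oplus\;\bigl(M(G)\otimes \mathbb{Z}\bigr) \;\cong\; M(H)\oplus (G^{ab}\otimes H^{ab})\oplus M(G).
\]
The Tor piece consists of $\mathrm{Tor}_1^{\mathbb{Z}}(\mathbb{Z},H^{ab})$ and $\mathrm{Tor}_1^{\mathbb{Z}}(G^{ab},\mathbb{Z})$, both of which vanish because $\mathbb{Z}$ is flat. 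Combining, $M(G\times H)\cong M(G)\oplus M(H)\oplus (G^{ab}\otimes H^{ab})$, as required.

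The main obstacle is justifying the Künneth formula in the group-homological setting; once one accepts that $\mathrm{Tot}(P_\bullet\otimes Q_\bullet)$ is a projective $\mathbb{Z}[G\times H]$-resolution whenever $P_\bullet\to\mathbb{Z}$ and $Q_\bullet\to\mathbb{Z}$ are projective resolutions over $\mathbb{Z}[G]$ and $\mathbb{Z}[H]$ (equivalently the Eilenberg--Zilber equivalence for the product of bar constructions), the remainder is bookkeeping. An alternative, more hands-on route would be to pick free presentations $G=F_1/R_1$, $H=F_2/R_2$, construct the presentation of $G\times H$ whose relations are $R_1$, $R_2$ together with the commutators $[F_1,F_2]$, and compute $(R\cap [F,F])/[F,R]$ via the Hopf formula, but the identification of the cross-term with $G^{ab}\otimes H^{ab}$ would still require essentially the same homological input.
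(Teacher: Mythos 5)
This theorem is stated in the paper as a quoted background result, attributed to R.~Lal's \emph{Algebra 2}, and no proof is given there, so there is no in-paper argument to compare against. Your Künneth-formula derivation is the standard modern proof and is correct: identifying $M(-)$ with $H_2(-,\mathbb{Z})$ (legitimate for finite groups, where $H^2(G,\mathbb{C}^\times)\cong H_2(G,\mathbb{Z})$ by universal coefficients and finiteness of $H_2$), the degree-$2$ tensor terms give exactly $M(G)\oplus M(H)\oplus (G^{ab}\otimes H^{ab})$ and the Tor terms in total degree $1$ vanish because $H_0=\mathbb{Z}$ is flat. The only step you flag as needing justification --- that the total complex of $P_\bullet\otimes_{\mathbb{Z}}Q_\bullet$ is a projective $\mathbb{Z}[G\times H]$-resolution of $\mathbb{Z}$ --- is indeed the crux, and it goes through because each $P_i$ is $\mathbb{Z}$-free (so the acyclicity of the total complex follows from the algebraic Künneth theorem over $\mathbb{Z}$) and $P_i\otimes_{\mathbb{Z}}Q_j$ is projective over $\mathbb{Z}[G]\otimes_{\mathbb{Z}}\mathbb{Z}[H]\cong\mathbb{Z}[G\times H]$. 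Your alternative sketch via the Hopf formula is also viable and is closer to Schur's original treatment, but as you note it ultimately reproves the same cross-term identification; nothing is missing from the main argument.
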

\begin{theorem} \cite{KP} \label{Schur multiplier of $D_n$}
The Schur multiplier of the dihedral group $D_n$ is given by
\[   
M(D_n) = 
     \begin{cases}
       1  &\quad :n \ \text{is odd}\\
       \mathbb{Z}_2  &\quad :n \ \text{is even} \\ 
     \end{cases}
\]
\end{theorem}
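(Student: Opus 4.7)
The plan is to split the argument by the parity of $n$.

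The odd case is immediate. For odd $n$, every Sylow subgroup of $D_n$ is cyclic: the Sylow $2$-subgroup has order $2$, and every Sylow $p$-subgroup for odd $p\mid n$ is conjugate into the cyclic rotation subgroup of order $n$. Theorem~\ref{Schur multiplier of p cyclic groups} therefore forces $M(D_n)=1$.

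For even $n$ I would first dispose of the subcase $n=2m$ with $m$ odd. In this case the centre $\langle r^m\rangle\cong \mathbb{Z}_2$ (where $r$ is a rotation of order $2m$) intersects the subgroup $\langle r^2,s\rangle\cong D_m$ trivially, because $r^m$ is not an even power of $r$ when $m$ is odd, and together they exhaust $D_{2m}$. This yields $D_{2m}\cong D_m\times \mathbb{Z}_2$, and Theorem~\ref{Schur multiplier 2} then gives
\[
M(D_{2m})\cong M(D_m)\times M(\mathbb{Z}_2)\times (D_m^{ab}\otimes \mathbb{Z}_2).
\]
By the odd case $M(D_m)=1$, while $M(\mathbb{Z}_2)=1$ because $\mathbb{Z}_2$ is cyclic; also $D_m^{ab}\cong \mathbb{Z}_2$, so Theorem~\ref{tensor product} turns the last factor into $\mathbb{Z}_2\otimes \mathbb{Z}_2\cong \mathbb{Z}_2$. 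Hence $M(D_{2m})\cong \mathbb{Z}_2$.

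The remaining subcase $4\mid n$ is where the \textbf{main obstacle} lies, because the direct product decomposition breaks down (the centre $\langle r^{n/2}\rangle$ now sits inside $\langle r^2\rangle$). Here my plan is to apply Hopf's formula $M(G)=(R\cap [F,F])/[F,R]$ to the standard presentation $D_n=F/R$, with $F$ free on $x,y$ and $R$ the normal closure of $\{x^n,\,y^2,\,(xy)^2\}$. In the abelianisation $F^{ab}=\mathbb{Z}^2$, the image of $R$ is the sublattice generated by $(n,0),(0,2),(2,0)$, which collapses to $2\mathbb{Z}\oplus 2\mathbb{Z}$ as soon as $n$ is even; this traces the potential $\mathbb{Z}_2$ summand to the redundancy between $x^n$ and $(x^2)^{n/2}$ among the relators. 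The delicate step is then to verify both that the corresponding class in $R\cap[F,F]$ is non-trivial modulo $[F,R]$ and that no further $p$-torsion appears --- the latter following again from the cyclic Sylow argument for odd $p\mid n$.
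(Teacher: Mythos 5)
The paper does not prove this statement at all: it is quoted from Karpilovsky \cite{KP} as background, so there is no internal proof to compare against. Judged on its own terms, your argument is complete and correct in two of the three cases. The odd case via Theorem~\ref{Schur multiplier of p cyclic groups} is fine, and the subcase $n=2m$ with $m$ odd is also fine: the decomposition $D_{2m}\cong D_m\times\mathbb{Z}_2$ is valid precisely because $r^m\notin\langle r^2\rangle$ when $m$ is odd, and Theorems~\ref{Schur multiplier 2} and \ref{tensor product} then give $M(D_{2m})\cong 1\times 1\times(\mathbb{Z}_2\otimes\mathbb{Z}_2)\cong\mathbb{Z}_2$.

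The case $4\mid n$, however, is only a plan, and the gap you flag yourself is exactly the content of the theorem. The abelianisation computation you describe only recovers $R/(R\cap[F,F])$, i.e.\ it identifies $D_n^{ab}\cong(\mathbb{Z}/2)^2$; it gives no information about the quotient $(R\cap[F,F])/[F,R]$ that Hopf's formula actually asks for. Two independent things remain unproved: (i) a lower bound, i.e.\ that the candidate class is non-trivial modulo $[F,R]$ --- equivalently, that $D_n$ admits a stem extension with kernel $\mathbb{Z}_2$ (one can exhibit the double cover $\langle x,y: x^{2n}=1,\ y^2=x^n,\ yxy^{-1}=x^{-1}\rangle$, which surjects onto $D_n$ with central kernel $\langle x^n\rangle$ contained in the derived subgroup); and (ii) an upper bound $|M(D_n)|\le 2$, which does not follow from anything you have written. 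Your remark that odd torsion is excluded ``by the cyclic Sylow argument'' also overreaches the quoted Theorem~\ref{Schur multiplier of p cyclic groups}, which requires \emph{all} Sylow subgroups to be cyclic; to kill the odd part alone you need the standard (but uncited) fact that the $p$-part of $M(G)$ embeds in $M(P)$ for a Sylow $p$-subgroup $P$ --- and note that for $4\mid n$ the Sylow $2$-subgroup is itself dihedral, so even that route forces an induction or a direct $2$-group computation that you have not supplied.
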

\begin{theorem} \cite{KP}
Let $\text{Sym}(n)$ be the symmetric group on $n$ symbols. Then the Schur multiplier:
\[   
M(\text{Sym}(n)) = 
     \begin{cases}
       1  &\quad :n\leq 3\\
       \mathbb{Z}_2  &\quad :n\geq 4 \\ 
     \end{cases}
\]
\end{theorem}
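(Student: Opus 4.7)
My plan is to split on $n$. For $n \leq 3$, the group $\text{Sym}(n)$ is small and has cyclic Sylow subgroups; for $n \geq 4$, one has to invoke Schur's classical double-cover construction.

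In detail, for $n \leq 3$, $\text{Sym}(1)$ is trivial, $\text{Sym}(2) \cong \mathbb{Z}_2$, and $\text{Sym}(3) \cong D_3$ has order $6$ with Sylow-$2$ and Sylow-$3$ subgroups $\mathbb{Z}_2$ and $\mathbb{Z}_3$, both cyclic. Theorem \ref{Schur multiplier of p cyclic groups} immediately yields $M(\text{Sym}(n)) = 1$; as a double check, Theorem \ref{Schur multiplier of $D_n$} also handles $n = 3$.

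For $n \geq 4$ I would bound $M(\text{Sym}(n))$ above and below by $\mathbb{Z}_2$. For the lower bound, construct Schur's double cover $\tilde{\Sigma}_n$ with generators $\tau_1, \ldots, \tau_{n-1}$ together with a central involution $z$, subject to $\tau_i^2 = z$, $(\tau_i \tau_{i+1})^3 = z$, and $[\tau_i, \tau_j] = z$ for $|i - j| \geq 2$. Killing $z$ recovers the Coxeter presentation of $\text{Sym}(n)$, producing a central extension $1 \to \mathbb{Z}_2 \to \tilde{\Sigma}_n \to \text{Sym}(n) \to 1$. Since $n \geq 4$, the relation $z = [\tau_1, \tau_3]$ places $\mathbb{Z}_2 \subseteq [\tilde{\Sigma}_n, \tilde{\Sigma}_n]$, so the extension is a stem extension and $M(\text{Sym}(n))$ surjects onto $\mathbb{Z}_2$. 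A delicate point is verifying that $z$ is genuinely nontrivial in $\tilde{\Sigma}_n$; I would establish this by exhibiting a faithful Clifford-algebra representation sending each $\tau_i$ to a suitable product of gamma matrices.

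The matching upper bound $|M(\text{Sym}(n))| \leq 2$ is the principal obstacle. I would apply the Hopf formula $M(G) \cong (R \cap [F,F])/[F,R]$ to the Coxeter presentation of $\text{Sym}(n)$, with free group $F = F(t_1, \ldots, t_{n-1})$ and relator set $\{t_i^2,\ (t_it_{i+1})^3,\ [t_i, t_j] : |i-j| \geq 2\}$. A careful bookkeeping in $F/[F,R]$ shows that every element of $R \cap [F,F]$ reduces modulo $[F,R]$ to a power of the single commutator class corresponding to $z$ above, and that this class is $2$-torsion. Combining the two bounds gives $M(\text{Sym}(n)) \cong \mathbb{Z}_2$ for $n \geq 4$.
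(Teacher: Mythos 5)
The paper offers no proof of this statement: it is quoted verbatim from Karpilovsky \cite{KP} as a known classical result, so there is no internal argument to measure yours against. Your outline is the standard classical (Schur) route. The case $n\le 3$ is complete and correct: $\mathrm{Sym}(1)$, $\mathrm{Sym}(2)\cong\mathbb{Z}_2$ and $\mathrm{Sym}(3)\cong D_3$ all have cyclic Sylow subgroups, so Theorem~\ref{Schur multiplier of p cyclic groups} gives triviality at once. For $n\ge 4$ the two-sided bound is the right strategy, and the lower bound is sound in outline: with the relations $\tau_i^2=z$, $(\tau_i\tau_{i+1})^3=z$, $[\tau_i,\tau_j]=z$ for $|i-j|\ge 2$, killing $z$ does recover the Coxeter presentation, and for $n\ge 4$ the relation $z=[\tau_1,\tau_3]$ puts $\langle z\rangle$ inside $[\tilde\Sigma_n,\tilde\Sigma_n]$, so the extension is stem and $M(\mathrm{Sym}(n))$ surjects onto $\langle z\rangle$. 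You correctly flag that everything hinges on $z\ne 1$, and the Clifford-algebra (spin) representation is the standard way to see this; that step is deferred but unproblematic.

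The substantive gap is the upper bound. The sentence ``a careful bookkeeping in $F/[F,R]$ shows that every element of $R\cap[F,F]$ reduces modulo $[F,R]$ to a power of the single commutator class \dots and that this class is $2$-torsion'' is not a proof but a restatement of the theorem: that reduction \emph{is} Schur's computation. To close it you would need to exhibit the actual relations in $R/[F,R]$ -- in particular why the classes of $[t_i,t_j]$ for the various pairs with $|i-j|\ge 2$ all coincide (this uses centrality of $R/[F,R]$ in $F/[F,R]$ together with conjugation by suitable braid words), why the classes coming from the relators $t_i^2$ and $(t_it_{i+1})^3$ contribute nothing new to $R\cap[F,F]$ modulo $[F,R]$, and why the surviving class squares to the identity. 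None of this is hard, but as written the decisive inequality $|M(\mathrm{Sym}(n))|\le 2$ is asserted rather than derived, so the proposal should be regarded as a correct plan with its main computation still to be carried out.
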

\begin{theorem} \cite{RL}\label{Schur multiplier of quaternion group}
The Schur multiplier of quaternion group $Q_{n}=\langle x,y:x^2=y^{n},xyx^{-1}=y^{-1}\rangle$ of order $4n$ is trivial.
\end{theorem}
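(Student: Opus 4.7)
My plan is to exploit the fact that the given presentation of $Q_n$ uses equally many generators as relations, and deduce triviality of $M(Q_n)$ from Hopf's formula. Write $Q_n = F/R$ with $F$ the free group on $\{x,y\}$ and $R$ the normal closure of $r_1 = x^2 y^{-n}$ and $r_2 = xyx^{-1}y$. The starting point is the five-term exact sequence
\[
0 \longrightarrow M(Q_n) \longrightarrow R/[F,R] \longrightarrow F^{\mathrm{ab}} \longrightarrow Q_n^{\mathrm{ab}} \longrightarrow 0,
\]
in which $M(Q_n) \cong (R \cap [F,F])/[F,R]$ is the kernel I aim to control.

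The key computational step is to bound $R/[F,R]$. Because $R$ is normally generated by two elements and conjugation by $F$ acts trivially on $R/[F,R]$, this abelian group is generated by the two classes $\bar r_1, \bar r_2$; in particular it has at most two generators. Its image in $F^{\mathrm{ab}} = \mathbb{Z}^2$ is the sublattice spanned by the abelianizations $(2,-n)$ and $(0,2)$ of the relators, whose $2 \times 2$ determinant equals $4$; hence the image has rank $2$ and $Q_n^{\mathrm{ab}}$ is a finite group of order $4$. Since the image has rank $2$ while $R/[F,R]$ is generated by only two elements, the free part of $R/[F,R]$ must itself be $\mathbb{Z}^2$ and the torsion part must vanish. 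As $F^{\mathrm{ab}}$ is torsion-free, the kernel $M(Q_n)$ injects into the (now trivial) torsion subgroup of $R/[F,R]$, forcing $M(Q_n) = 1$.

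As a consistency check, for odd $n$ the conclusion is immediate from Theorem \ref{Schur multiplier of p cyclic groups}. Indeed, the cyclic subgroup $\langle y\rangle$ of order $2n$ contains a Sylow $p$-subgroup for every odd prime $p$ dividing $2n$, and when $n$ is odd the Sylow $2$-subgroup is $\langle x\rangle$, which is cyclic of order $4$ since $x^4 = (x^2)^2 = y^{2n} = 1$. The main obstacle is therefore the even case, where the Sylow $2$-subgroup is itself a smaller generalized quaternion group and Theorem \ref{Schur multiplier of p cyclic groups} is inapplicable. The Hopf-formula route above bypasses this obstruction uniformly; the delicate point is the generator-count step for $R/[F,R]$, which relies on $R$ being the normal closure of only two elements and on the trivial action of $F$ modulo $[F,R]$.
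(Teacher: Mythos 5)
The paper does not prove this statement at all; it is quoted as a known result with a citation to R.~Lal's \emph{Algebra 2}, so there is no in-paper argument to compare against. Your proposal supplies a genuine self-contained proof, and it is essentially the classical ``deficiency zero'' argument: $Q_n$ has a balanced presentation (two generators, two relators), Hopf's formula together with the five-term exact sequence
\[
0 \to M(Q_n) \to R/[F,R] \to F^{\mathrm{ab}} \to Q_n^{\mathrm{ab}} \to 0
\]
shows $R/[F,R]$ is a $2$-generated abelian group whose image in $\mathbb{Z}^2$ is the full-rank lattice spanned by $(2,-n)$ and $(0,2)$ (determinant $4$), whence $R/[F,R]\cong\mathbb{Z}^2$ is torsion-free and the kernel $M(Q_n)$ vanishes. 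All the substantive steps are right: the generation of $R/[F,R]$ by the two relator classes because $F$ acts trivially modulo $[F,R]$, the determinant computation, and the observation that a $2$-generated abelian group of free rank $2$ has no torsion. One small slip in the last step: ``as $F^{\mathrm{ab}}$ is torsion-free, the kernel injects into the torsion subgroup'' is not a valid inference in general (the kernel of $\mathbb{Z}^2\to\mathbb{Z}$, $(a,b)\mapsto a$, is torsion-free and nontrivial). What you actually need --- and have already established --- is that the image has rank $2$, so the kernel has rank $0$ and, being a subgroup of the torsion-free group $R/[F,R]\cong\mathbb{Z}^2$, must be trivial. With that justification repaired, the proof is complete and arguably more informative than the paper's bare citation, since the same argument gives triviality of $M(G)$ for any finite group of deficiency zero. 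Your consistency check for odd $n$ via cyclic Sylow subgroups is also correct and matches the spirit of Theorem~\ref{Schur multiplier of p cyclic groups} as used elsewhere in the paper.
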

\begin{theorem} \cite{SNAN} \label{Schur Multiplier of 8q}
Let $G$ be a non abelian group of order $8p$, where $p$ is an odd prime. Then the Schur multiplier
\[
  M(G) = \left\{
     \begin{array}{@{}l@{\thinspace}l}
       1  &: G~\text{is of type}~(2),(6),(8),(10),(12),(14),~\text{or}~ (16)\\
       \mathbb{Z}_2 &: G~\text{is of type}~(1),(4),(5),(7),(9),(11),(13),~\text{or}~(15) \\
        \mathbb{Z}_2\times \mathbb{Z}_2 \times \mathbb{Z}_2  &: G~ \text{is of type}~(3) \\

     \end{array}
   \right.
\]
\end{theorem}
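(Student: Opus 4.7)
The plan is to proceed case-by-case through the sixteen isomorphism classes enumerated in Theorem~\ref{Structure on 8p}, using the preliminary results of this section. The classes fall naturally into four groups, each handled by a different tool; most cases reduce immediately to Theorems~\ref{Schur multiplier of p cyclic groups}, \ref{Schur multiplier 2}, \ref{tensor product}, and the Schur-multiplier computations for $D_n$ and $Q_n$ recalled above.

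The direct-product types (1)--(5) are reduced by Theorem~\ref{Schur multiplier 2} to $M(A)\times M(B)\times(A^{ab}\otimes B^{ab})$; the Schur multipliers of the dihedral and quaternion factors come from the theorems for $M(D_n)$ and $M(Q_n)$, and the tensor product collapses to $\mathbb{Z}_{\gcd}$ by Theorem~\ref{tensor product}. Types (7) and (8) are $D_{4p}$ and $Q_{2p}$, handled directly by those same two theorems. Types (6), (10), and (12) have both the Sylow $2$- and Sylow $p$-subgroups cyclic, so Theorem~\ref{Schur multiplier of p cyclic groups} gives $M=1$ at once. Type (11), although not presented as a direct product, decomposes as $\mathbb{Z}_2\times(\mathbb{Z}_4\ltimes\mathbb{Z}_p)$ because $b$ is central, after which the same machinery applies and yields $M=\mathbb{Z}_2$.

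For the three classical-group types (13)--(15)---$\mathbb{Z}_2\times A_4$, $SL(2,3)$, $\text{Sym}(4)$---I would invoke the standard values $M(A_4)=\mathbb{Z}_2$, $M(SL(2,3))=1$, $M(\text{Sym}(4))=\mathbb{Z}_2$ (the last from the symmetric-group theorem recalled just above); case (13) then follows from Theorem~\ref{Schur multiplier 2} after noting that $A_4^{ab}=\mathbb{Z}_3$ forces the tensor factor to be trivial.

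The genuine obstacles are the nontrivial semidirect products (9) and (16). For (9), $G\cong D_4\ltimes\mathbb{Z}_p$ with $D_4$ acting through its $\mathbb{Z}_2$-quotient as inversion, my plan is to run the Lyndon--Hochschild--Serre spectral sequence for $1\to\mathbb{Z}_p\to G\to D_4\to 1$: since $\gcd(p,8)=1$ the $2$- and $p$-primary parts decouple, the $2$-primary piece inherits $M(D_4)=\mathbb{Z}_2$, and the $p$-primary contribution is controlled by $(\mathbb{Z}_p)_{D_4}$, which vanishes because the inversion action makes $2$ invertible modulo $p$. Type (16) occurs only for $p=7$---the orbit of $a$ under conjugation by $d$ forces length $7$---and realizes $\mathbb{Z}_2^3\rtimes\mathbb{Z}_7$; the analogous spectral sequence reduces $M(G)$ to the $\mathbb{Z}_7$-coinvariants of $H_2(\mathbb{Z}_2^3)\cong\mathbb{F}_2^3$ with the induced action, and the expected vanishing rests on the irreducibility of the underlying $\mathbb{F}_2[\mathbb{Z}_7]$-module. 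The main obstacle I anticipate is exactly this last step: identifying the induced action on $H_2$ and verifying that no coinvariant survives is the one place where no black-box theorem of this section directly applies.
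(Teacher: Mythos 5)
This statement is imported verbatim from the reference \cite{SNAN}; the paper supplies no proof of its own, so there is nothing internal to compare your argument against. Judged on its own terms, your case-by-case plan is sound and the values it produces agree with the stated table in all sixteen cases: the K\"unneth-type formula of Theorem~\ref{Schur multiplier 2} together with $M(D_n)$, $M(Q_n)$ and Theorem~\ref{tensor product} disposes of types $(1)$--$(5)$, $(7)$, $(8)$ and, after the observation that $b$ is central, type $(11)$; the cyclic-Sylow criterion of Theorem~\ref{Schur multiplier of p cyclic groups} kills $(6)$, $(10)$, $(12)$; and $(13)$--$(15)$ follow from the classical values $M(A_4)=\mathbb{Z}_2$ and $M(\mathrm{SL}(2,3))=1$ (which, note, are \emph{not} among the results recalled in this section, so you would need to cite them separately). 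Your identification of $(9)$ and $(16)$ as the only cases requiring genuine work is correct, and the coprime Lyndon--Hochschild--Serre argument does close them: for $N\rtimes Q$ with $\gcd(|N|,|Q|)=1$ one gets $M(G)\cong M(Q)\oplus H_1(Q,H_1(N))\oplus H_0(Q,M(N))$ with the middle term vanishing by coprimality, which gives $\mathbb{Z}_2$ for $(9)$ and reduces $(16)$ to the $\mathbb{Z}_7$-coinvariants of $\Lambda^2(\mathbb{F}_2^3)\cong V^{*}$, an irreducible nontrivial module, hence zero.

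Two small corrections. First, in case $(9)$ the coinvariants $(\mathbb{Z}_p)_{D_4}$ that you invoke feed into $H_1(G)$, not $H_2(G)$; the terms relevant to the multiplier are $H_2(D_4,\mathbb{Z})=\mathbb{Z}_2$, $H_1(D_4,\mathbb{Z}_p)$ and $H_0(D_4,H_2(\mathbb{Z}_p))$, the last two vanishing by coprimality and because $M(\mathbb{Z}_p)=1$ --- the conclusion is unaffected, but the bookkeeping as written is off by one degree. Second, in case $(16)$ you should verify explicitly that the order of the automorphism $a\mapsto b\mapsto c\mapsto ab$ of $\mathbb{Z}_2^3$ is $7$ (it is: iterating on $a$ returns to $a$ only at the seventh step), which both forces $p=7$ and gives the irreducibility of $V=\mathbb{F}_2^3$ as an $\mathbb{F}_2[\mathbb{Z}_7]$-module that your final vanishing step relies on.
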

\section{Main Results}
Let $(G,\cdot, *)$ be a multiplicative Lie algebra. Then by the universal property of non abelian exterior square of a group, we have a unique homomorphism $\phi$ from $G\wedge G$ to $G$ defined by $\phi(g\wedge h)=g*h$ and a short exact sequence of groups
\begin{center}
\begin{tikzcd}
  \{1\}\arrow{r} & \text{Ker}(\phi)\arrow{r} & G\wedge G \arrow{r}{\phi} & G*G\arrow{r} & \{1\}\cdots (1)
\end{tikzcd}
\end{center}
Also we have an epimorphism $\chi : G\wedge G \longrightarrow [G, G]$ defined by $\chi(g\wedge h)= [g, h]$ and kernel of $\chi$ is the Schur multiplier $M(G)$ of $G$, that is, we have following short exact sequence 
\begin{center}
\begin{tikzcd}
  \{1\}\arrow{r} & M(G)\arrow{r} & G\wedge G \arrow{r}{\chi} & \text{[$G$,$G$]} \arrow{r} & \{1\}\cdots (2)
\end{tikzcd}
\end{center}
From the exact sequence $(2)$, it is clear that if $G$ is abelian, then $M(G)=G\wedge G$. Also if the Schur multiplier $M(G)=\{1\}$, then $G\wedge G= [G,G]$ and so order of $G*G$ divides the order of $[G,G]$. Now we prove the following theorem which says that every multiplicative Lie algebra structure $(*)$ on a group $G$ can be uniquely determined by a homomorphism from $G\wedge G$ to $G$ under certain conditions.
\begin{theorem} \label{Wedge}
Let $G$ be a group. A homomorphism $\phi$ from $G\wedge G$ to $G$ defines a multiplicative Lie algebra structure $(*)$ on $G$ by putting $x*y=\phi(x\wedge y)$ if and only if the normal subgroup $J$ generated by the elements $\{(\phi(x\wedge y)\wedge ^yz)(\phi(y\wedge z)\wedge ^zx)(\phi(z\wedge x)\wedge ^xy)\mid x,y,z \in G \}$ belongs to the kernal of $\phi$ and $^z{\phi(x\wedge y)}=\phi(^zx\wedge ^zy),~\forall ~x,y,z\in G$. In other words, any multiplicative Lie algebra structure $(*)$ on $G$ can be uniquely determined by a homomorphism $\bar{\phi}$ from $\frac{G\wedge G}{J}$ to $G$ satisfying $^z{\bar{\phi}((x\wedge y)J)}=\bar{\phi}((^zx\wedge ^zy)J).$
\end{theorem}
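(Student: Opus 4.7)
The plan is to exploit the universal property of the non-abelian exterior square: a function $f:G\times G\to H$ (for any group $H$) satisfying the four defining wedge relations extends uniquely to a group homomorphism $G\wedge G\to H$. So the whole proof reduces to translating each of the five multiplicative Lie algebra axioms into a property of $\phi$, and vice versa.

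For the forward direction (necessity), assume $(G,\cdot,*)$ is a multiplicative Lie algebra. I would first verify that $f(a,b)=a*b$ satisfies the four wedge relations: $a*a=1$ is axiom (1); $(a*b)(b*a)=1$ is a consequence of axioms (1)--(3), obtained by expanding $(ab)*(ab)=1$ twice (first by axiom (3), then by axiom (2)) to get ${}^a[(b*a)(a*b)]=1$; the two distributivity wedge relations $ab\wedge c=({}^ab\wedge {}^ac)(a\wedge c)$ and $a\wedge bc=(a\wedge b)({}^ba\wedge {}^bc)$ follow from axioms (3) and (2) respectively, after using axiom (5) to rewrite the conjugates. The universal property then produces the unique homomorphism $\phi\colon G\wedge G\to G$ with $\phi(x\wedge y)=x*y$. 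Axiom (5) transcribes word for word into the conjugation condition on $\phi$. For the Jacobi axiom (4), the identity
\[
\bigl((a*b)*{}^bc\bigr)\bigl((b*c)*{}^ca\bigr)\bigl((c*a)*{}^ab\bigr)=1
\]
unfolds, using $x*y=\phi(x\wedge y)$ and the fact that $\phi$ is a homomorphism, to
\[
\phi\Bigl(\bigl(\phi(a\wedge b)\wedge {}^bc\bigr)\bigl(\phi(b\wedge c)\wedge {}^ca\bigr)\bigl(\phi(c\wedge a)\wedge {}^ab\bigr)\Bigr)=1.
\]
Hence every generator of $J$ lies in $\ker\phi$, and since $\ker\phi$ is normal in $G\wedge G$, the normal closure $J$ is contained in $\ker\phi$.

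For the converse (sufficiency), suppose $\phi$ satisfies both hypotheses and define $x*y=\phi(x\wedge y)$. Axioms (1) and (5) are immediate. For axioms (2) and (3) one applies $\phi$ to the corresponding wedge relation and then uses the conjugation hypothesis to convert $\phi({}^ba\wedge {}^bc)$ into ${}^b\phi(a\wedge c)$. Axiom (4) is exactly the statement that $\phi$ vanishes on the generators of $J$, which holds by assumption. For the final statement, since $J\subseteq\ker\phi$ one gets $\bar\phi\colon (G\wedge G)/J\to G$ from the first isomorphism theorem; the conjugation condition descends because the generating set of $J$ is stable under the $G$-action $c\cdot(x\wedge y)={}^cx\wedge {}^cy$, which one checks by noting that conjugating the $(x,y,z)$-generator by $c$ produces the $({}^cx,{}^cy,{}^cz)$-generator (using the conjugation property of $\phi$), so $J$ itself is $G$-stable.

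The only step that requires genuine care rather than mechanical unfolding is verifying the wedge relation $(a*b)(b*a)=1$ in the forward direction, since this is not one of the five listed multiplicative Lie algebra axioms but an indirect consequence of axioms (1)--(3); missing it would obstruct the invocation of the universal property. Everything else is a direct translation between the two sets of relations, and the factoring through $(G\wedge G)/J$ is an immediate application of the first isomorphism theorem once $G$-stability of $J$ is noted.
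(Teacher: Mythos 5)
Your proof is correct and follows essentially the same route as the paper's: translate the five multiplicative Lie algebra axioms into the wedge relations plus the two stated conditions on $\phi$, and conversely. The one place you go beyond the paper is in verifying that $f(a,b)=a*b$ satisfies the relation $(a*b)(b*a)=1$ before invoking the universal property (the paper simply asserts the existence of $\phi$ at the start of Section 3), which is a worthwhile detail but not a different argument.
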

\begin{proof}
Let $\phi$ be a group homomorphism from $G\wedge G$ to $G$ such that $(G,\cdot,*)$ be a multiplicative Lie algebra, where $*$ is given by $g*h=\phi(g\wedge h)$. Then
 
$\phi((\phi(x\wedge y)\wedge ^yz)(\phi(y\wedge z)\wedge ^zx)(\phi(z\wedge x)\wedge ^xy))=$
$\phi(((x*y)\wedge ^yz)((y*z)\wedge ^zx)((z*x)\wedge ^xy))=\phi(((x*y)\wedge ^yz))\phi(((y*z)\wedge ^zx))\phi(((z*x)\wedge ^xy))=((x*y)* ^yz)((y*z)* ^zx)((z*x)* ^xy)=1$ (by identity (4) of Definition \ref{MLA}). In turn, $J\subseteq $Ker$(\phi)$.

Also, $^z{\phi(x\wedge y)}=^z{(x*y)}=^zx*^zy=\phi(^zx\wedge ^zy),~\forall~x,y,z\in G$ (by identity (5) of Definition \ref{MLA}).

Conversely, suppose $J \subseteq~ \text{Ker}(\phi)$ and $^z{\phi(x\wedge y)}=\phi(^zx\wedge ^zy),~\forall~x,y,z\in G$. Define a map $*$ from $G\times G$ to $G$ by $g*h=\phi(g\wedge h)$.

\textbf{Claim:} $*$ is a multiplicative Lie algebra structure on $G$.
\begin{enumerate}
\item $x*x=\phi(x\wedge x)=\phi(1)=1$.
\item $xy*z=\phi(xy\wedge z)=\phi(^xy\wedge ^xz)\phi(x\wedge z)=~ ^x{\phi(y\wedge z)}\phi(x\wedge z)=~ ^x{(y*z)}(x*z)$ 
\item $x*yz=\phi(x\wedge yz)=\phi(x\wedge y)\phi(^yx\wedge ^yz)=\phi(x\wedge y)^y{(\phi(x\wedge z))}=(x*y)^y{(x*z)}$.
\item For all  $x,y,z\in G$, we have 
$(x*y)*^yz=\phi(x\wedge y) * ^yz=\phi(\phi(x\wedge y)\wedge ^yz)$. Hence  
 $(((x*y)* ^yz)((y*z)* ^zx)((z*x)* ^xy))$
$=\phi((\phi(x\wedge y)\wedge ^yz)(\phi(y\wedge z)\wedge ^zx)(\phi(z\wedge x)\wedge ^xy))=1$ (since $J \subseteq~ \text{Ker}(\phi)$).

\item $^z{(x*y)}=~ ^z{\phi(x\wedge y)}=\phi(^zx\wedge ^zy)=(^zx*^zy)$.
\end{enumerate}
This shows that $*$ is a multiplicative Lie algebra structure on $G$. 
\end{proof}
\begin{remark} 
\begin{enumerate}
\item If $G$ is a group with trivial Schur multiplier, then $G\wedge G \cong [G,G]$. So we can write $g\wedge h$ by $[g,h]$. Moreover, $\phi(x\wedge y)\wedge ^yz$= $[\phi([x, y], ^yz]=\phi([[x,y],^yz])$ (from the proof of Proposition $2.6$ of \cite{RLS}). Therefore $((\phi(x\wedge y)\wedge ^yz)(\phi(y\wedge z)\wedge ^zx)(\phi(z\wedge x)\wedge ^xy))=\phi([[x,y],^yz][[y,z],^zx][[z,x],^xy])=1$. So, $J$ is trivial subgroup of $G\wedge G\cong [G,G]$ and $^z\phi(x\wedge y)=\phi(^zx\wedge ^zy)\Rightarrow ^z\phi([x,y])=\phi(^z[x,y])$ which shows  that $\phi$ is a $G$-equivariant homomorphism from $[G,G]$ to $G$. It turns out that any multiplicative Lie algebra structure can be uniquely obtained from a $G-$ equivariant homomorphism from $[G,G]$ to $G$ which gives Proposition $\ref{Equivariant homomorphism}$. 
\item If $G$ is an cyclic group, that is, abelian group with trivial Schur multiplier then $G\wedge G$ is the trivial group, so there is only one homomorphism form $G\wedge G$ to $G$. Therefore $G$ has only trivial multiplicative Lie algebra structure.
\end{enumerate}
\end{remark}
\begin{corollary}
Let $G$ be an abelian group with the Schur multiplier $\mathbb{Z}_p$, where $p$ is a prime. Then $G$ has exactly two distinct multiplicative Lie structures.
\end{corollary}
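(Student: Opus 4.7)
The plan is to apply Theorem~\ref{Wedge} to reduce the classification of multiplicative Lie algebra structures on $G$ to the classification of homomorphisms $\phi\colon G\wedge G\to G$ meeting the two conditions of that theorem, and then to enumerate such homomorphisms by the isomorphism type of their image.

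First I would unwind the hypotheses. Since $G$ is abelian, the commutator subgroup is trivial, so the short exact sequence $(2)$ gives $G\wedge G\cong M(G)\cong\mathbb{Z}_p$, and the equivariance condition in Theorem~\ref{Wedge} is automatic because conjugation acts trivially on $G$. Thus a multiplicative Lie algebra structure on $G$ is precisely a homomorphism $\phi\colon\mathbb{Z}_p\to G$ whose Jacobi subgroup $J$ lies in $\ker\phi$.

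The heart of the argument is showing that $J$ is the trivial subgroup of $G\wedge G$ for every choice of $\phi$, so that all homomorphisms $\phi\colon\mathbb{Z}_p\to G$ are admissible. Because $G$ is abelian, the map $(x,y)\mapsto x\wedge y$ is bilinear and alternating, and hence so is $B(x,y):=\phi(x\wedge y)\in G$. A short check shows that the Jacobiator
\[
\psi(x,y,z)=(B(x,y)\wedge z)(B(y,z)\wedge x)(B(z,x)\wedge y)\in G\wedge G
\]
is trilinear and alternating in $x,y,z$, and so factors through the third exterior power $\Lambda^3 G$. I would then invoke the fact that $M(G)=\Lambda^2 G\cong\mathbb{Z}_p$ severely restricts $G$: using the formula $\Lambda^2(\bigoplus_i C_{n_i})\cong\bigoplus_{i<j}\mathbb{Z}_{\gcd(n_i,n_j)}$, one sees that $G$ must have the form $\mathbb{Z}_{p^a}\times\mathbb{Z}_p\times H$ with $H$ cyclic of order coprime to $p$ (or the analogous shape including a $\mathbb{Z}$-summand), and in every such case a direct computation yields $\Lambda^3 G=0$. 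Thus $\psi\equiv 0$ and $J=1$.

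With this in hand, the count is immediate: any homomorphism $\phi\colon\mathbb{Z}_p\to G$ is either zero, giving the trivial structure with $G*G=1$, or injective, giving $G*G\cong\mathbb{Z}_p$. A nonzero $\phi$ exists because $\Lambda^2 G$ having $p$-torsion forces $G$ itself to have an element of order $p$. Since the two possible images $1$ and $\mathbb{Z}_p$ are non-isomorphic, by the paper's convention there are exactly two distinct structures on $G$. I expect the main obstacle to be the verification that $\Lambda^3 G$ vanishes, which comes down to a brief case analysis based on the classification of abelian groups with $\Lambda^2 G\cong\mathbb{Z}_p$.
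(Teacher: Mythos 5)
Your proposal is correct, and it reaches the same count by the same overall reduction (Theorem~\ref{Wedge} plus $G\wedge G\cong M(G)\cong\mathbb{Z}_p$ for abelian $G$), but the key step --- showing the obstruction subgroup $J$ is trivial --- is handled quite differently from the paper. The paper argues indirectly: since $\{1\}$ and $\mathbb{Z}_p$ are the only subgroups of $G\wedge G$, either $J$ is trivial or the condition $J\subseteq\ker\phi$ forces every structure to be trivial; the latter is ruled out by citing Proposition~\ref{Lie simple 2} (a non-cyclic abelian group is not Lie simple, and $G$ is non-cyclic because its Schur multiplier is nontrivial). Your route is a direct computation: you observe that for abelian $G$ the Jacobiator is an alternating trilinear map into $\Lambda^2G$, hence factors through $\Lambda^3G$, and that the constraint $\Lambda^2G\cong\mathbb{Z}_p$ forces $G$ to have torsion-free-plus-cyclic rank at most two, so $\Lambda^3G=0$ and $J=1$ for \emph{every} homomorphism $\phi$. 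This is more work but buys more: it is self-contained (no appeal to the Lie-simplicity classification of abelian groups from \cite{RLS}), and it shows that every homomorphism $\mathbb{Z}_p\to G$ is admissible, not merely that some nontrivial one is. It is in fact the same computational strategy the paper deploys later for $\mathbb{Z}_m\times\mathbb{Z}_n$, so your argument would generalize cleanly there. The only caveat is that your case analysis of abelian groups with $\Lambda^2G\cong\mathbb{Z}_p$ implicitly assumes $G$ is finitely generated; for the final count you should also note (as you do) that the two possible images $\{1\}$ and $\mathbb{Z}_p$ are non-isomorphic, which is exactly what the paper's convention for ``distinct'' structures requires.
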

\begin{proof}
Since $G$ is abelian, $ G\wedge G\cong \mathbb{Z}_p$.
By Theorem $\ref{Wedge}$, any multiplicative Lie algebra structure can be uniquely obtained from a group homomorphism $\phi$ from $\mathbb{Z}_p$ to $G$ with $J \subseteq~ \text{Ker}(\phi)$ ($J$ is defined in Theorem $\ref{Wedge}$). Since $\{ 1\}$ and  $\mathbb{Z}_p$ are only two subgroups of $\mathbb{Z}_p$, $J$ can be either trivial or $\mathbb{Z}_p$. Suppose $J$ is $\mathbb{Z}_p$. Then $G$ has only trivial multiplicative Lie algebra structure which is not possible since $G$ is not a cyclic group. Hence $J$ must be trivial. Again since there are just two distinct homorphisms from $\mathbb{Z}_p$ to $G$, $G$ has only two distinct multiplicative Lie algebra structures. 
\end{proof}

Now we calculate the number of distinct multiplicative Lie algebra structures on some finite groups.
\begin{theorem}
Let $G$ be a non-abelian group of order $pq$ such that $p$ and $q$ are distinct primes. Then $G$ is Lie simple.
\end{theorem}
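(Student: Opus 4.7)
The plan is to invoke \propref{Equivariant homomorphism}, which reduces the problem to exhibiting a $G$-equivariant homomorphism $\phi:[G,G]\to G$ that is neither the zero map nor the inclusion. First I would observe that, since the Sylow subgroups of $G$ are of prime order and hence cyclic, \textbf{Theorem \ref{Schur multiplier of p cyclic groups}} gives $M(G)=\{1\}$. Therefore \propref{Equivariant homomorphism} applies and every multiplicative Lie algebra structure on $G$ corresponds uniquely to a $G$-equivariant homomorphism $[G,G]\to G$.

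Next I would pin down the structure of $G$. Assume without loss of generality $p<q$; then by the standard classification of groups of order $pq$ we have
\[
G=\langle a,b\mid a^q=b^p=1,\ bab^{-1}=a^r\rangle,
\]
where $r\not\equiv 1\pmod q$. The derived subgroup is $[G,G]=\langle a\rangle\cong\mathbb{Z}_q$, and this is the unique subgroup of $G$ of order $q$. Consequently every group homomorphism $\phi:\mathbb{Z}_q\to G$ has image contained in $[G,G]$, so such $\phi$ are exactly the power maps $\phi_k(a^i)=a^{ik}$ for $k\in\{0,1,\dots,q-1\}$.

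I would then verify $G$-equivariance automatically. Conjugation by any element of $G$ acts on $[G,G]=\langle a\rangle$ by some power $a\mapsto a^s$, and power maps commute with power maps, so each $\phi_k$ is $G$-equivariant. Thus every $\phi_k$ yields, via $x*_k y:=\phi_k([x,y])=[x,y]^k$, a multiplicative Lie algebra structure on $G$.

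Finally, to exhibit a Lie simple structure I would choose any $k\in\{2,3,\dots,q-1\}$, which is possible because $q\geq 3$. Then $*_k$ is not the trivial structure, since $[b,a]=a^{r-1}$ has order $q$ (as $r\not\equiv 1\pmod q$), giving $b*_k a=a^{k(r-1)}\neq 1$. It is also not the commutator structure, for otherwise $[b,a]^{k-1}=1$ would force $q\mid k-1$, contradicting $1\le k-1<q$. Hence $(G,\cdot,*_k)$ is Lie simple. The only mildly delicate step is the identification of all homomorphisms $[G,G]\to G$, which is handled by noting that $[G,G]$ is the unique subgroup of order $q$; the rest of the argument is a short verification once \propref{Equivariant homomorphism} is in place.
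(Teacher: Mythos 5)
Your argument runs in the wrong direction: you have inverted what ``Lie simple'' means in this paper. Although the definition in the introduction is worded confusingly, the surrounding statements force the intended reading: \propref{Lie simple 2} asserts that every cyclic group is Lie simple, yet a cyclic group admits \emph{only} the trivial structure (as the paper notes in the remark following Theorem~\ref{Wedge}), so ``$G$ is Lie simple'' must mean that $G$ admits \emph{no} multiplicative Lie algebra structure beyond $a*b=1$ and $a*b=[a,b]$, where two structures are identified whenever the ideals $G*_1G$ and $G*_2G$ are isomorphic (the convention fixed at the end of Section 1). The paper's own proof of this theorem confirms this: it concludes from the fact that there are only \emph{two} distinct $G$-equivariant homomorphisms $\mathbb{Z}_q\to G$ that $G$ is Lie simple. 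You instead set out to \emph{exhibit} a structure that is neither trivial nor the commutator; had that succeeded in the paper's sense, it would prove the negation of the theorem.

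The saving grace is that your construction does not actually produce a new structure in the paper's sense: each $*_k$ with $x*_ky=[x,y]^k$ and $k\not\equiv 0\pmod q$ satisfies $G*_kG=\langle a\rangle=[G,G]$, so $*_k$ is not distinct from the commutator structure under the stated convention (distinctness is measured by the isomorphism class of $G*G$, not by the binary operation itself). Moreover, everything needed for the correct conclusion is already in your write-up and matches the paper's route: $M(G)=\{1\}$ by Theorem~\ref{Schur multiplier of p cyclic groups}, so \propref{Equivariant homomorphism} applies; $[G,G]\cong\mathbb{Z}_q$ is the unique subgroup of order $q$; and every homomorphism $[G,G]\to G$ is a power map, automatically $G$-equivariant, with image either $\{1\}$ or $[G,G]$. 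Hence there are exactly two distinct structures, the trivial one and the commutator one, and $G$ is Lie simple. Replacing your final paragraph with this conclusion repairs the proof.
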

\begin{proof}
By Theorem $\ref{Schur multiplier of p cyclic groups}$, the Schur multiplier of $G$ is trivial. Therefore from Proposition $\ref{Equivariant homomorphism}$, every multiplicative Lie algebra structure on $G$ can be obtained from a $G-$ equivariant homomorphism from $[G,G]$ to $G$. Suppose $p\mid (q-1)$. So $[G,G]\cong \mathbb{Z}_q$. Since there are only two distinct $G-$equivariant homomorphism from $\mathbb{Z}_q$ to $G$, $G$ is Lie simple.
\end{proof}
If gcd$(m,n)=1$, then $\mathbb{Z}_m \times \mathbb{Z}_n$ will be cyclic and become Lie simple. In the next result, we will see the characterization of multiplicative Lie algebra structures of $\mathbb{Z}_m \times \mathbb{Z}_n$ for non co-prime integers $m$ and $n$. Now onward $\tau(n)$ denotes the number of positive divisors of $n$.
\begin{theorem}
Let $G=\mathbb{Z}_m \times \mathbb{Z}_n$ be an abelian group of order $mn$ with $gcd(m,n)=d~ (\neq 1)$. Then it has $\tau(d)$ distinct multiplicative Lie algebra structures.
\end{theorem}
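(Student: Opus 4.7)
The plan is to apply Theorem~\ref{Wedge} combined with the Schur multiplier material of Section~2. Because $G$ is abelian, $[G,G]$ is trivial, so the sequence~(2) collapses to $G\wedge G\cong M(G)$. Applying Theorem~\ref{Schur multiplier 2} together with Theorem~\ref{tensor product} and the triviality of the Schur multiplier of a cyclic group (Theorem~\ref{Schur multiplier of p cyclic groups}), one obtains $M(G)\cong M(\mathbb{Z}_m)\times M(\mathbb{Z}_n)\times(\mathbb{Z}_m\otimes\mathbb{Z}_n)\cong\mathbb{Z}_d$, generated by $a\wedge b$, where $a$ and $b$ are canonical generators of the two cyclic factors.

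Next I would verify that the two side conditions in Theorem~\ref{Wedge} are automatically satisfied. Since $G$ is abelian, all conjugation is trivial, so the equivariance requirement holds vacuously. For the Jacobi condition, abelianness forces $\wedge$ to be bilinear, so writing $x\wedge y=(a\wedge b)^{e(x,y)}$ for the alternating bilinear form $e$ on $G$, each Jacobi element becomes a power of $a\wedge b$ whose exponent is $e(x,y)\lambda(z)+e(y,z)\lambda(x)+e(z,x)\lambda(y)$, where $\lambda(u):=e(\gamma,u)$ and $\gamma:=\phi(a\wedge b)$. A direct expansion shows this integer is identically zero; conceptually, this is the vanishing of any alternating trilinear form on a $2$-generated abelian group. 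Hence $J=\{1\}$, and every group homomorphism $\phi:\mathbb{Z}_d\to G$ produces a multiplicative Lie algebra structure on $G$.

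Such a homomorphism $\phi$ is pinned down by $\gamma=\phi(a\wedge b)\in G$, an element whose order divides $d$. The ideal $G*G$ equals $\phi(G\wedge G)=\langle\gamma\rangle$: normality is automatic in the abelian setting, and closure under $*$ with $G$ follows from bilinearity, since $x*\gamma^{k}=\phi(x\wedge\gamma^{k})\in\langle\gamma\rangle$. Thus $G*G$ is cyclic of order $|\gamma|$, necessarily a divisor of $d$, and by the paper's convention two structures are distinct precisely when their orders $|\gamma_{1}|$, $|\gamma_{2}|$ differ.

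To finish, every divisor $k$ of $d$ is realized: since $k\mid d\mid m$, the element $(m/k,0)\in\mathbb{Z}_m\times\mathbb{Z}_n$ has order exactly $k$, so choosing this as $\gamma$ produces a structure with $|G*G|=k$. Hence the number of distinct multiplicative Lie algebra structures on $G$ equals the number of divisors of $d$, namely $\tau(d)$. The main obstacle is the Jacobi verification; while the underlying identity is elementary, one must first exploit the bilinearity of $\wedge$ in the abelian case to reduce it to an alternating trilinear expression on a $2$-generated module, and then observe that such an expression necessarily vanishes.
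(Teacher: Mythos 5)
Your proposal is correct and follows essentially the same route as the paper: identify $G\wedge G\cong M(G)\cong\mathbb{Z}_d$ via Theorems~\ref{Schur multiplier 2} and~\ref{tensor product}, show the subgroup $J$ of Theorem~\ref{Wedge} vanishes, and then count homomorphisms $\mathbb{Z}_d\to G$ up to isomorphism of their images. The only differences are cosmetic: the paper verifies $J=\{1\}$ by the explicit exponent computation $(kq-lp)(is-jr)+(ps-qr)(il-jk)+(rl-sk)(iq-jp)=0$, which is exactly your ``alternating trilinear form on a $2$-generated module vanishes'' argument written out in coordinates, and you are somewhat more explicit than the paper about why the $\tau(d)$ homomorphisms yield exactly $\tau(d)$ non-isomorphic ideals $G*G$.
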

\begin{proof}
Suppose $G=\langle a,b:a^m=b^n=1,~ab=ba \rangle$. Since $G$ is an abelian group, by Theorem $\ref{Schur multiplier 2}$ and Theorem $\ref{tensor product}$, $G\wedge  G \cong M(G)\cong \mathbb{Z}_d$. Let $\phi:G\wedge  G\to G$ be a group homomorphism given by  $\phi(a\wedge b)=a^ib^j$, for some suitable $1\leq i \leq m$ and $1\leq j \leq n.$ and $x,y,z \in G$ such that $x=a^kb^l$, $y=a^pb^q$ and $z=a^rb^s$ for some positive integers $k,l,p,q,r \ \text{and}\ s$. Consider the expressions 

$\phi(x\wedge y)\wedge z=\phi((a^kb^l)\wedge (a^pb^q))\wedge a^rb^s=(\phi(a^k\wedge b^q)\phi(b^l\wedge a^p))\wedge a^rb^s$

$=((a^ib^j)^{kq}(a^ib^j)^{-lp})\wedge a^rb^s=((a^i)^{(kq-lp)}(b^j)^{(kq-lp)})\wedge a^rb^s$

$=(a^{i(kq-lp))}\wedge b^s)(b^{j(kq-lp))}\wedge a^r)=(a\wedge b)^{(kq-lp)(is-jr)}$

Similarly, 
$\phi(y\wedge z)\wedge x=\phi(a^pb^q\wedge a^rb^s)\wedge a^kb^l=(a\wedge b)^{(ps-qr)(il-jk)}$

and, $\phi(z\wedge x)\wedge y=\phi(a^rb^s\wedge a^kb^l)\wedge a^pb^q=(a\wedge b)^{(rl-sk)(iq-jp)}$. Hence $(\phi(x\wedge y)\wedge z)(\phi(y\wedge z)\wedge x(\phi(y\wedge z)\wedge x))=1$.

From the above calculation, it is clear that for any homomorphism $\phi:G\wedge  G\to G$,  the normal subgroup $J$ generated by the elements $\{(\phi(x\wedge y)\wedge ^yz)(\phi(y\wedge z)\wedge ^zx)(\phi(z\wedge x)\wedge ^xy)\mid x,y,z \in G \}$ is always trivial. So by Theorem $\ref{Wedge}$, every homomorphism from $G\wedge G$ to $G$ gives a multiplicative Lie algebra structure on $G$. Since there are $\tau(d)$ distinct homomorphisms from $\mathbb{Z}_d$ to $\mathbb{Z}_m \times \mathbb{Z}_n$, the number of distinct multiplicative Lie algebra structures on $G$ is equal to $\tau(d)$
\end{proof}
\begin{example}
The Klein's four group $V_4= \langle a,b:\ a^2=b^2=(ab)^2=1 \rangle$ is an abelian group of order $4$, there are two distinct multiplicative Lie algebra structure $(*)$ on $V_4$ given by  $a*b= 1 ~ \text{and} ~ a*b=a$.
\end{example}

\begin{theorem} \label{Structure on D_n}
The number of distinct multiplicative Lie algebra structures on the dihedral group $D_n=\langle  a,b:a^2=b^n=1, aba=b^{-1} \rangle$ is $\tau(n)$.
\end{theorem}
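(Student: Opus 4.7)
The approach is to apply Theorem~\ref{Wedge}, which reduces the problem to counting homomorphisms $\phi\colon D_n\wedge D_n\to D_n$ satisfying the equivariance and $J$-kernel conditions, up to isomorphism type of the image $\phi(D_n\wedge D_n) = G * G$.

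First, I will determine $D_n\wedge D_n$. Since $\langle b\rangle$ is abelian, the computations in the previous theorem show that every wedge $b^k\wedge b^l$ vanishes. Combining this with the defining relations of the exterior square and the dihedral relations $a^2 = 1$ and $aba^{-1} = b^{-1}$, a routine manipulation yields the identities
\[
a\wedge b^k = (a\wedge b)^k, \qquad ab^k\wedge b^l = (a\wedge b)^l, \qquad ab^k\wedge ab^l = (a\wedge b)^{k-l}.
\]
Every generator $x\wedge y$ of $D_n\wedge D_n$ is therefore a power of $a\wedge b$, so $D_n\wedge D_n$ is cyclic. The short exact sequence $1\to M(D_n)\to D_n\wedge D_n\to [D_n,D_n]\to 1$, combined with Theorem~\ref{Schur multiplier of $D_n$} and the fact that $[D_n,D_n] = \langle b^2\rangle$, gives $|D_n\wedge D_n| = n$ in both parities, so $D_n\wedge D_n\cong\mathbb{Z}_n$ generated by $a\wedge b$.

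With this identification, a homomorphism $\phi\colon\mathbb{Z}_n\to D_n$ is specified by $c := \phi(a\wedge b)$ with $c^n = 1$. For $n\ge 3$, writing out the equivariance condition on the generator $a\wedge b$, using ${}^{b^j}(a\wedge b) = a\wedge b$ and ${}^{ab^k}(a\wedge b) = (a\wedge b)^{-1}$, forces every rotation to centralise $c$ and every reflection to invert $c$; this pins $c$ down to an element of $\langle b\rangle$, say $c = b^m$. The Jacobi condition is then verified by a case split on whether each of $x,y,z$ is a rotation or a reflection: using the explicit wedge formulas above, each of the three factors in the Jacobi expression either lies in the trivial wedge of $\langle b\rangle$ with itself or cancels pairwise, and the expression vanishes in $\mathbb{Z}_n$. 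Consequently the normal subgroup $J$ is trivial and every $c = b^m$ gives a bona fide multiplicative Lie algebra structure with ideal $G * G = \langle b^m\rangle$.

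Finally, counting: subgroups of the cyclic group $\langle b\rangle\cong\mathbb{Z}_n$ are in bijection with divisors of $n$, and distinct subgroups of a cyclic group have distinct orders and are therefore pairwise non-isomorphic. This yields exactly $\tau(n)$ distinct isomorphism types of $G * G$. The main obstacle I anticipate is the even $n$ case: there $M(D_n) = \mathbb{Z}_2$ is nontrivial so Proposition~\ref{Equivariant homomorphism} is unavailable, and one must argue via the full Theorem~\ref{Wedge}; the saving grace is that $D_n\wedge D_n$ remains cyclic of order $n$, so the equivariance-and-Jacobi analysis is uniform in the parity of $n$ once this preliminary computation is done.
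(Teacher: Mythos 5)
Your proposal is correct and follows essentially the same route as the paper: identify $D_n\wedge D_n\cong\mathbb{Z}_n=\langle a\wedge b\rangle$, apply Theorem~\ref{Wedge} to the homomorphisms $a\wedge b\mapsto b^m$, verify the equivariance and $J$-conditions, and count the $\tau(n)$ non-isomorphic images $\langle b^m\rangle$. The only difference is cosmetic: you additionally derive from equivariance that the image of $a\wedge b$ must lie in $\langle b\rangle$, a point the paper leaves implicit in its count of ``distinct homomorphisms.''
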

\begin{proof}
Since $D_n\wedge D_n \cong \mathbb{Z}_n = \langle a\wedge b \rangle$, we claim that for each $i=1,2,\ldots n$, $\phi_i:\mathbb{Z}_n\to D_n$ defined by $\phi_i(a\wedge b)=b^i$ is a group homomorphism such that $J\subseteq$Ker$(\phi_i)$ and $^z\phi_i(x\wedge y)=\phi_i(^zx\wedge ^zy)$ for all $x,y,z\in D_n$. It is clear that $\phi_i$ is a group homomorphism for each $i=1,2,\ldots n$. Now to show that $J\subseteq $Ker$(\phi_i)$, let $x=a,~y=b,~z=ab^j, j=1,\ldots n$, then 

$\phi_i(x\wedge y)\wedge ^yz=\phi_i(a\wedge b)\wedge ^bab^j=b^i\wedge bab^jb^{n-1}=b^i\wedge ab^{j+n-2}=b^i\wedge a$ 

$\phi_i(y\wedge z)\wedge ^zx=\phi_i(b\wedge ab^j)\wedge ^{ab^j}a=b^{n-1}\wedge ab^{2j}=b^{n-i}\wedge a$ and

$\phi_i(z\wedge x)\wedge ^xy=\phi_i(ab^j\wedge a)\wedge ^ab=\phi_i(^ab^j\wedge a)\wedge aba=1$.
Now, $\phi_i((b^i\wedge a)(b^{n-i}\wedge a))=1$ and $^{ab^j}\phi_i(a\wedge b)=ab^jb^iab^j=b^{n-i}=\phi_i(^{ab^j}a\wedge ^{ab^j}b)$. Similarly we can show that for each element of $D_n$, the conditions of Theorem $\ref{Wedge}$ are satisfied for the homomorphism $\phi_i$. Since there are only $\tau(n)$ distinct homomorphism from $\mathbb{Z}_n$ to $D_n$, $D_n$ has $\tau(n)$ distinct multiplicative Lie algebra structures. 
\end{proof}
\begin{example}
\begin{enumerate}
\item On $D_3$, there are only two distinct multiplicative Lie algebra structures $(*)$ given in usual way $a*b=1 ~ \text{and}~a*b= aba^{-1}b^{-1}$. In fact, it is Lie simple.
\item On $D_4$, we can define distinct multiplicative Lie algebra structures in following ways, $a*b=1, ~ a*b=b~\text{and}~a*b=b^2$. It can be verified from Theorem $\ref{Structure on D_n}$ that in each case $(D_4,\cdot,*)$ forms a multiplicative Lie algebra and the ideal $D_4*D_4$ is isomorphic to following subgroups $\lbrace 1 \rbrace, ~ \lbrace 1,b,b^2,b^4\rbrace ~ \text{and}~[D_4,D_4]$ of $D_4$ respectively. Therefore, $D_4$ has exactly $\tau(4)$ multiplicative Lie structures.

\end{enumerate} 
\end{example}
\begin{remark}
For odd $n$, $M(D_n)$ is trivial (Theorem $\ref{Schur multiplier of $D_n$})$. Therefore by using Proposition $\ref{Equivariant homomorphism}$, we can give a different proof of Theorem \ref{Structure on D_n}.
Let $\phi$ be a homomorphism from $[D_n,D_n]$ to $D_n$ and $g ~ (=a^ib^j) \in D_n, \ h= b^k\in [D_n,D_n]$. Therefore

\[   
ghg^{-1} = 
     \begin{cases}
       b^k ,&\quad i \ \text{is even}\\
       b^{-k}, &\quad i \ \text{is odd} \\ 
     \end{cases}
\]
It turns out that $\phi$ is a $D_n-$ equivariant homomorphism. Therefore every homomorphism from $[D_n,D_n]$ to $D_n$ gives a multiplicative Lie algebra structure. Further it can be seen that $Hom(\mathbb{Z}_n,D_n)\cong Hom(Z_n,Z_n)$. Hence, we conclude that $D_n$ has $\tau(n)$ distinct multiplicative Lie algebra structure. 
\end{remark}
\begin{theorem}
The alternating group $A_n$ is Lie simple for all $n$.
\end{theorem}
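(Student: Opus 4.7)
My plan is to split the argument by the value of $n$, handling the easy ranges by invoking results already established in the introduction, and devoting the real work to the exceptional case $n=4$.

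For $n\le 2$ the group $A_n$ is trivial. For $n=3$, $A_3\cong\mathbb{Z}_3$ is cyclic, so Lie simplicity is immediate from \propref{Lie simple 2}. For $n\ge 5$, $A_n$ is a non-abelian simple group, so \propref{Lie simple 1} applies. The only case requiring real work is $n=4$, since $A_4$ is neither simple (it contains the normal Klein four-subgroup $V_4=[A_4,A_4]$) nor has trivial Schur multiplier, so \propref{Equivariant homomorphism} does not apply directly.

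For $n=4$, the plan is to invoke Theorem~\ref{Wedge}. Any multiplicative Lie algebra structure $*$ on $A_4$ is induced by a group homomorphism $\phi\colon A_4\wedge A_4\to A_4$ with $g*h=\phi(g\wedge h)$, so that $A_4*A_4=\phi(A_4\wedge A_4)$. The key numerical input I would use is $|A_4\wedge A_4|=8$, obtained from the short exact sequence (2) together with the standard facts $M(A_4)\cong\mathbb{Z}_2$ and $[A_4,A_4]=V_4$. The order of the image $A_4*A_4$ therefore divides $8$, and the conjugation-equivariance built into Theorem~\ref{Wedge} forces this image to be a normal subgroup of $A_4$. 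The normal subgroups of $A_4$ are $\{e\}$, $V_4$, and $A_4$, of orders $1$, $4$ and $12$, so only $\{e\}$ and $V_4$ can occur as $A_4*A_4$. These two possibilities are exactly the ideals produced by the trivial structure and the commutator structure, respectively; by the paper's convention that two structures are distinct only when their ideals $G*G$ are non-isomorphic, this shows $A_4$ admits no multiplicative Lie algebra structure distinct from these two, and hence is Lie simple.

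The main obstacle is entirely concentrated in the $n=4$ case: rather than attempting to classify the homomorphisms $\phi\colon A_4\wedge A_4\to A_4$ explicitly (which would require identifying $A_4\wedge A_4$ as a specific group of order $8$ and chasing the Jacobi condition), I avoid this by using only the order and normality constraints. Granted the standard computation $M(A_4)\cong\mathbb{Z}_2$, the argument then reduces to reading off the normal subgroup lattice of $A_4$.
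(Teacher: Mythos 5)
Your proposal is correct and follows essentially the same route as the paper: both arguments reduce the problem to the case $n=4$ (the other cases being trivial, cyclic, or simple), and both exclude $A_4*A_4=A_4$ by noting that $A_4*A_4$ is a normal subgroup of $A_4$ whose order divides $|A_4\wedge A_4|=8$, leaving only the trivial subgroup and $V_4=[A_4,A_4]$. The only cosmetic difference is that the paper identifies $A_4\wedge A_4$ as the quaternion group of order $8$ outright, whereas you extract its order from the exact sequence involving $M(A_4)\cong\mathbb{Z}_2$.
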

\begin{proof} 
For $n\leq 3$, it is trivial. Now take $n=4$. Let $*$ be a multiplicative Lie algebra structure on $A_4$. Since $A_4* A_4$ is a normal subgroup of $A_4$, it can be equal to either $A_4$ or $[A_4, A_4]$ or the trivial subgroup. 
We know that $A_4\wedge A_4$ is isomorphic to $Q_2$ and order of $A_4*A_4$ divides the order of $A_4\wedge A_4$. So, $A_4*A_4$ can not be $A_4$. Therefore $A_4$ is Lie simple. 

Since $A_n$ is simple group for $n\geq 5$, by Proposition $\ref{Lie simple 3}$,  $A_n$ is also Lie simple.  
\end{proof}
\begin{theorem}
The symmetric group Sym$(n)$ is Lie simple for all $n$.
\end{theorem}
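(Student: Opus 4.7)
The plan is to split the argument on $n$. For $n \leq 2$, $\text{Sym}(n)$ is cyclic and hence Lie simple by \propref{Lie simple 2}; for $n = 3$, $\text{Sym}(3)$ is non-abelian of order $2 \cdot 3$, so Lie simple by the preceding theorem on non-abelian groups of order $pq$. I may therefore assume $n \geq 4$.

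For $n \geq 4$, I would take an arbitrary multiplicative Lie algebra structure $*$ on $G := \text{Sym}(n)$ and study the associated homomorphism $\phi : G \wedge G \to G$ from Theorem~\ref{Wedge}, so that $G * G = \operatorname{Im}(\phi)$. Combining $M(\text{Sym}(n)) = \mathbb{Z}_2$ with $[G,G] = A_n$ in the exact sequence $(2)$ gives $|G \wedge G| = 2 \cdot (n!/2) = n! = |G|$. Since $G*G$ is an ideal, it is normal in $\text{Sym}(n)$; for $n \geq 5$ it then lies in $\{\{1\}, A_n, G\}$, while for $n = 4$ it may in addition equal the Klein four-group $V_4$. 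The task reduces to excluding $G * G = G$ (for all $n \geq 4$) and $G*G = V_4$ (when $n = 4$).

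For the exclusion $G*G = G$, surjectivity of $\phi$ together with $|G \wedge G| = |G|$ would force $\phi$ to be an isomorphism, which I would contradict via a centre count: the kernel $M(G) = \mathbb{Z}_2$ of the commutator map $\chi : G \wedge G \to [G,G]$ lies in $Z(G \wedge G)$ (a standard consequence of the Peiffer identity for the crossed module $(G \wedge G,\, G,\, \chi)$), whereas $Z(\text{Sym}(n)) = \{1\}$ for $n \geq 3$. For the exclusion $G*G = V_4$ when $n = 4$, I would observe that since $V_4$ is abelian, $\phi$ factors through $(G \wedge G)^{ab}$; chasing the diagram obtained from $M(G) = \mathbb{Z}_2$, the commutator map, and the further quotient $A_4 \twoheadrightarrow A_4/[A_4, A_4] = \mathbb{Z}_3$ shows $(G \wedge G)^{ab}$ is cyclic of order dividing $6$, and such a group cannot surject onto the non-cyclic group $V_4 \cong \mathbb{Z}_2 \times \mathbb{Z}_2$.

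Putting the cases together, every multiplicative Lie algebra structure on $\text{Sym}(n)$ satisfies $G*G \in \{\{1\}, A_n\} = \{\{1\}, [G,G]\}$, so $\text{Sym}(n)$ is Lie simple. I expect the main technical obstacle to be the centrality assertion $M(G) \subseteq Z(G \wedge G)$, which is not proved in the paper; I would either insert a short derivation from the exterior-square relations (applying the Peiffer identity ${}^{\chi(x)}y = xyx^{-1}$ to $x \in \ker\chi$) or add a citation, in order to keep the argument self-contained.
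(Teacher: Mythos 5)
Your proposal is correct and follows the same skeleton as the paper: list the candidates for $\text{Sym}(n)*\text{Sym}(n)$ among the normal subgroups, then use the extension $1\to \mathbb{Z}_2\to \text{Sym}(n)\wedge\text{Sym}(n)\to A_n\to 1$ to rule out $\text{Sym}(n)$ itself and (for $n=4$) $V_4$. The differences are in the tactics. For small $n$ you route through \propref{Lie simple 2} and the order-$pq$ theorem, where the paper uses equivariant homomorphisms from $A_3$; both are fine. For excluding $G*G=G$ the paper simply notes that $G\wedge G$ has a normal subgroup of order $2$ while $\text{Sym}(n)$ has none; your version via $M(G)\subseteq Z(G\wedge G)$ and $Z(\text{Sym}(n))=\{1\}$ is correct, and the Peiffer-identity derivation you sketch does work ($m\in\ker\partial$ gives $mm'm^{-1}={}^{\partial m}m'=m'$), but it is more machinery than you need: a normal subgroup of order $2$ is automatically central in any group, and in any case you have already enumerated the normal subgroups of $\text{Sym}(n)$ and none has order $2$, so the ``main technical obstacle'' you flag can simply be deleted. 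The genuinely different step is the exclusion of $V_4$ for $n=4$: the paper invokes the external identification $\text{Sym}(4)\wedge\text{Sym}(4)\cong \text{SL}(2,3)$ and the absence of a normal subgroup of order $6$, whereas your abelianization count needs only the extension data --- $(G\wedge G)^{\mathrm{ab}}$ surjects onto $A_4^{\mathrm{ab}}\cong\mathbb{Z}_3$ with kernel of order at most $2$ (since $[G\wedge G,G\wedge G]$ maps onto $[A_4,A_4]=V_4$), hence is cyclic of order dividing $6$ and cannot surject onto $V_4$. That makes your proof more self-contained than the paper's at this point, at the cost of a slightly longer diagram chase.
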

\begin{proof} Let $*$ be a multiplicative Lie algebra structure on Sym$(n)$. To prove the theorem, it is enough to show that $\text{Sym}(n)* \text{Sym}(n)$ can be only either $\{1\}$ or $A_n$. We divide the proof in four cases:

\textbf{Case $(1):$} For $n=1,2$, it is clear.

\textbf{Case $(2):$} For $n=3$, the Schur multiplier $M(\text{Sym}(3))$ is trivial. Therefore any multiplicative Lie algebra structure on $\text{Sym}(3)$ can be uniquely determined by a $\text{Sym}(3)-$ equivariant homomorphism from $A_3$ to $\text{Sym}(3)$. Since there are only two distinct homomorphism from $A_3$ to $\text{Sym}(3)$, $\text{Sym}(3)$ is Lie simple.

\textbf{Case $(3):$} Now for $n \geq 5,~\text{Sym}(n)$ has only one proper normal subgroup $A_n$. Therefore to show that $\text{Sym}(n)$ is Lie simple it is enough to show that $\text{Sym}(n)* \text{Sym}(n)$ can not be equal to $\text{Sym}(n)$. Suppose on contrary. Then $\text{Sym}(n)\wedge \text{Sym}(n)$ has to be isomorphic to $\text{Sym}(n)$ which is not possible since $\text{Sym}(n)\wedge \text{Sym}(n)$ has a normal subgroup of order $2$ that can be seen by the following short exact sequence of groups
\begin{center}
\begin{tikzcd}
  \{1\}\arrow{r} & \mathbb{Z}_2\arrow{r} & \text{Sym}(n)\wedge \text{Sym}(n) \arrow{r} & A_n \arrow{r} & \{1\}
\end{tikzcd}
\end{center}
but Sym$(n)$ has no normal subgroup of order $2$. Therefore $\text{Sym}(n)* \text{Sym}(n)$ can not be equal to $\text{Sym}(n)$. Hence Sym$(n)$ is Lie simple for all $n\geq 5.$ 

\textbf{Case $(4):$} Now for $n =4,~\text{Sym}(4)$ has two proper normal subgroup $A_4$ and $V_4$. Therefore to show that $\text{Sym}(n)$ is Lie simple it suffices to prove that $\text{Sym}(n)* \text{Sym}(n)$ can not be equal to $\text{Sym}(n)$ and $V_4$. By a similar argument of \textbf{Case (3)}, we can see that Sym$(4)*$Sym$(4)$ can not be Sym$(4)$. 

Finally, we show that Sym$(4)*\text{Sym}(4)$ can not be isomorphic to $V_4$. Suppose on contrary. then there exists a surjective group homomorphism from $\text{Sym}(4)\wedge \text{Sym}(4)$ to $V_4$. But this leads to a contradiction since $\text{Sym}(4)\wedge \text{Sym}(4)\cong \text{SL}(2,3)$ and $\text{SL}(2,3)$ has no normal subgroup of order $6$. Therefore, Sym$(4)*\text{Sym}(4)$ can not be isomorphic to $V_4$.

Hence Sym$(n)$ is Lie simple for all $n$.
\end{proof}
\begin{theorem}\label{Quarternian group}
The Quarternian group $Q_{n}=\langle x,y:x^2=y^n,xyx^{-1}=y^{-1}\rangle$ of order $4n$ has $\tau(n)$ distinct multiplicative Lie algebra structures.
\end{theorem}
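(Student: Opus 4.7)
The plan is to apply Proposition~\ref{Equivariant homomorphism}, since Theorem~\ref{Schur multiplier of quaternion group} gives $M(Q_n) = 1$. Consequently every multiplicative Lie algebra structure on $Q_n$ is uniquely determined by a $Q_n$-equivariant group homomorphism $\phi : [Q_n, Q_n] \to Q_n$, and the associated ideal $Q_n \ast Q_n$ coincides with the image of $\phi$. The strategy is then to (i) identify $[Q_n, Q_n]$, (ii) enumerate the $Q_n$-equivariant homomorphisms out of it, and (iii) count the resulting images up to isomorphism.

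For step (i), the defining relations force $y^{2n} = 1$ (so $y$ has order $2n$, as one sees from $x^4 = 1$), and $[x,y] = y^{-2}$. The quotient $Q_n/\langle y^2\rangle$ is abelian, so $[Q_n, Q_n] = \langle y^2 \rangle \cong \mathbb{Z}_n$.

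For step (ii), a homomorphism $\phi : \langle y^2 \rangle \to Q_n$ is determined by $\phi(y^2) = g$. Equivariance under conjugation by $y$ forces $g \in C_{Q_n}(y)$; a short calculation using $yx = xy^{-1}$ shows that for $n \geq 2$ no element of the form $xy^i$ commutes with $y$, so $C_{Q_n}(y) = \langle y \rangle$ and $g = y^k$. Equivariance under conjugation by $x$ then reads $x y^k x^{-1} = y^{-k}$, which is automatic. Well-definedness of $\phi$ on $\langle y^2\rangle \cong \mathbb{Z}_n$ requires $y^{kn} = 1$, and since $y$ has order $2n$ this forces $k$ to be even. Writing $k = 2j$ with $j \in \{0, 1, \dots, n-1\}$ yields exactly $n$ such $Q_n$-equivariant homomorphisms.

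For step (iii), the image of the homomorphism associated with $j$ is the cyclic subgroup $\langle y^{2j}\rangle$ of order $n/\gcd(n, j)$. Since all images are cyclic, two are isomorphic precisely when they share the same order; and as $j$ ranges over $\{0, 1, \dots, n-1\}$ the integer $n/\gcd(n, j)$ takes exactly the divisor-values of $n$, of which there are $\tau(n)$. Under the paper's convention that two structures are distinct whenever their ideals are non-isomorphic, this yields $\tau(n)$ distinct multiplicative Lie algebra structures on $Q_n$. The main obstacle is the centralizer computation $C_{Q_n}(y) = \langle y\rangle$ combined with the parity constraint $2 \mid k$; everything else reduces to standard counting of cyclic subgroups of $\mathbb{Z}_n$.
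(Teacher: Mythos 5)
Your proposal is correct and follows essentially the same route as the paper: trivial Schur multiplier, reduction to $Q_n$-equivariant homomorphisms $[Q_n,Q_n]=\langle y^2\rangle\to Q_n$, and a count of the resulting cyclic images giving $\tau(n)$. You are in fact somewhat more careful than the paper, which simply assumes the image lies in $\langle y\rangle$, whereas your centralizer argument and the parity constraint on $k$ justify this.
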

\begin{proof}
Since the Schur multiplier of $Q_{n}$ is trivial (Theorem $\ref{Schur multiplier of quaternion group}$), every multiplicative Lie algebra structure on $Q_n$ can be obtained from a $Q_{n}$- equivariant homomorphism from $[Q_{n},Q_{n}]$ to $Q_{n}$. From the presentation of $Q_{n}$, it can be seen that $x^4=y^{2n}=1$. Now we will show that every homomorphism $\phi$ from $[Q_{n},Q_{n}]~\text{to}~ Q_{n}$ is $Q_{n}-$ equivariant. Since $[Q_{n},Q_{n}]=\langle y^2 \rangle$, any $g\in [Q_{n},Q_{n}]$ is some power of $y^2$. If $\phi(y^2)=y^{m}$ is a homomorphism, then we have $\phi(xy^2x^{-1})=\phi(y^{-2})=y^{-m}$ and $x\phi(y^2)x^{-1}=xy^{m}x=y^{-m}$, which shows that $\phi$ is $Q_{n}$- equivariant homomorphism. Therefore every distinct homomorphism from $[Q_n,Q_n]$ to $Q_n$ gives a distinct multiplicative Lie structure on $Q_n$. Hence $Q_{n}$ has $\tau(n)$ multiplicative Lie structure.
\end{proof}
\begin{theorem}
Let $G$ be a non abelian group of order $8p$, $p$ is an odd prime with trivial Schur multiplier group. Then $G$ is Lie simple.
\end{theorem}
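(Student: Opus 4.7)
The plan is to invoke Proposition \ref{Equivariant homomorphism}: since $M(G) = 1$, every multiplicative Lie algebra structure $\ast$ on $G$ is uniquely determined by a $G$-equivariant homomorphism $\phi : [G, G] \to G$ with $a \ast b = \phi([a, b])$, and the ideal $G \ast G$ is precisely the image $\phi([G, G])$. Hence, to show $G$ is Lie simple, it suffices to prove that for every such $\phi$ the image $\phi([G, G])$ is isomorphic (as a subgroup of $G$) to $\{1\}$ or to $[G, G]$ itself, so that the only MLA structures on $G$ are the trivial one and the commutator.

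I would then run a case analysis over the seven isomorphism classes of non-abelian order-$8p$ groups picked out by Theorem \ref{Schur Multiplier of 8q} as having trivial Schur multiplier, namely types (2), (6), (8), (10), (12), (14), and (16) of Theorem \ref{Structure on 8p}. For types (2), (6), (10), (12), and (16), a direct computation from the presentations yields $[G, G]$ cyclic of prime order: $\mathbb{Z}_2$ in case (2), and $\mathbb{Z}_p = \langle b \rangle$ in the semidirect-product types. Then $\phi([G, G])$ is a cyclic subgroup of $G$ of order either $1$ or that prime, and Sylow-type counting together with normality forces it to equal $\{1\}$ or $[G, G]$. For case (14) $\mathrm{SL}(2, 3)$, one has $[G, G] = Q_8$, the unique Sylow $2$-subgroup; the subgroups of $\mathrm{SL}(2, 3)$ invariant under the conjugation action of an order-$3$ element are exactly $\{1\}$, the center $\{\pm I\}$, and $Q_8$, and the equivariance condition $\phi(g y g^{-1}) = g \phi(y) g^{-1}$ rules out $\{\pm I\}$, leaving only the two desired possibilities.

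The main obstacle is case (8), $G = Q_{2p}$. Here $[G, G] = \langle y^2 \rangle \cong \mathbb{Z}_{2p}$ admits proper non-trivial subgroups of orders $2$ and $p$, and Theorem \ref{Quarternian group} produces $\tau(2p) = 4$ equivariant homomorphisms $\phi$ whose images are $\{1\}, \mathbb{Z}_2, \mathbb{Z}_p, \mathbb{Z}_{2p}$ respectively. The delicate step is to show, using the paper's convention that two MLA structures are distinct only when $G \ast_1 G \not\cong G \ast_2 G$ as subalgebras of $G$, that the intermediate images $\mathbb{Z}_2$ and $\mathbb{Z}_p$ actually collapse with either the trivial or the commutator structure --- for instance via an automorphism of $Q_{2p}$ identifying them, or by checking that the induced bracket on these intermediate subgroups trivializes them as MLA-ideals. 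This reduction carries the bulk of the proof; the other six cases reduce to the routine Sylow/equivariance bookkeeping sketched above.
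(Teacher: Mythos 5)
Your overall strategy is the same as the paper's: invoke Proposition \ref{Equivariant homomorphism}, then run a case analysis over the seven types with trivial Schur multiplier. But there are two concrete problems. First, type (16) is not what you claim: its commutator subgroup is $\langle a,b,c\rangle\cong\mathbb{Z}_2\times\mathbb{Z}_2\times\mathbb{Z}_2$ (the group is $\mathbb{Z}_2^3\rtimes\mathbb{Z}_7$), not cyclic of prime order, so your ``routine Sylow/equivariance bookkeeping'' does not apply there. This case needs a genuine argument, which the paper supplies: writing $\phi(a)=x$, equivariance forces $\phi(b)=d^{-1}xd$ and $\phi(c)=d(xdxd^{-2})$, and one then shows no $G$-equivariant $\phi$ can have image of order $2$ or $4$.

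Second, and more seriously, the reduction you defer to for case (8), $Q_{2p}$, cannot be carried out. By the paper's own Theorem \ref{Quarternian group}, every homomorphism $\langle y^2\rangle\cong\mathbb{Z}_{2p}\to Q_{2p}$ is $Q_{2p}$-equivariant, giving $\tau(2p)=4$ structures whose ideals have orders $1$, $2$, $p$, $2p$. These are pairwise non-isomorphic groups, so under the paper's stated convention (structures are distinct iff $G*_1G\not\cong G*_2G$) they are four genuinely distinct structures; no automorphism of $Q_{2p}$ and no ``trivialization of the bracket'' can identify a subgroup of order $2$ with one of order $p$ or $2p$. You have in fact put your finger on a real defect: the paper disposes of this case by citing Theorem \ref{Quarternian group}, but that theorem yields $\tau(2p)=4>2$ structures and thus contradicts, rather than proves, Lie simplicity of $Q_{2p}$. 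So your instinct that this is the delicate case is right, but the gap is not fillable as you propose, and the theorem as stated appears to fail for type (8).
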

\begin{proof}
Since the Schur multiplier of $G$ is trivial, $G$ is isomorphic to exactly one group from $(2),(6),(8),(10),(12),(14),~\text{and}~ (16)$ in Theorem \ref{Structure on 8p}

Let $G=Q_2\times \mathbb{Z}_p$. Then $[G,G]\cong \mathbb{Z}_2$ which shows that $G$ is Lie simple.

If $G=\langle x,y:x^8=y^p=1,x^{-1}yx=y^{-1} \rangle$, then first we show that $[G,G]= \langle y \rangle$.
Since $\frac{G}{\langle y \rangle}$ is abelian, $[G,G]\leq \langle y \rangle$ and $[x^{-1},y]=x^{-1}yxy^{-1}=y^{-2}\in [G,G]\Rightarrow[G,G]=\langle y \rangle~ (\cong \mathbb{Z}_p)$. So the only normal subgroups of $[G,G]$ are $\{1\}$ and $[G,G]$ itself which forces  $G=\langle x,y:x^8=y^p=1,x^{-1}yx=y^{-1} \rangle$ to be Lie simple. 

Proceeding in a similar manner we can show that the groups $(\ref{10})$ and $(\ref{12})$ are also Lie simple. If $G=Q_{2p}$, then it follows from Theorem $\ref{Quarternian group}$. 

Now if $G=\langle a,b,c,d:a^4=b^2=c^2=d^p=1,~ab=ba, ac=ca, bc=cb,d^{-1}ad=b, d^{-1}bd=c,d^{-1}cd=ab \rangle$, then its commutator $[G,G]$ is generated by $a,b$ and $c$. Let $H$ be the subgroup of $G$ generated by the elements $a,b~\text{and}~c$, then clearly $H\cong \mathbb{Z}_2\times \mathbb{Z}_2\times \mathbb{Z}_2$ and $G/H \cong \langle d \rangle$. Therefore $[G,G]\leq H$. Also $[d^{-1},a]=ba,~[d^{-1},b]=cb~\text{and}~[d^{-1},c]=abc$. From which it clear that $[G,G]=H$.

 
Now to show that $G$ is  Lie simple, we need to show that there does not exist any $G-$equivariant homomorphism $\phi$ from $[G,G]$ to $G$ such that Im~$\phi$ is a subgroup of order $2$ or $4$. Let $\phi:[G,G]\rightarrow G$ be a $G-$ equivariant homomorphism which is not a monomorphism. Since $\phi$ is $G-$ equivariant homomorphism, $d^{-1}\phi(a)d=\phi(d^{-1}ad)=\phi(b)$ and $d^{-1}\phi(c)d=\phi(d^{-1}cd)=\phi(ab)\Rightarrow \phi(c)=d\phi(ab)d^{-1}$. Now if $\phi(a)=x,~\text{then}~\phi(b)=d^{-1}xd~\text{and}~\phi(c)=d(xdxd^{-2})$. Suppose Im~$\phi$ is  a subgroup of order $4$, we may conclude that $\circ(\phi(a))=\circ(\phi(b))\neq 1~\text{and}~\circ(\phi(c))=1$. Therefore $dxdxd^{-2}=1 \Rightarrow dxdx=d^{2}$. From the presentation of $G$, it is clear that $x=1$, which is a contradiction. Hence Im~$\phi$ can not be a subgroup of order $4$. It is also easy to see that Im~$\phi$ can not be a subgroup of order $2$.

Therefore $G=\langle a,b,c,d:a^4=b^2=c^2=d^p=1,~ab=ba, ac=ca, bc=cb,d^{-1}ad=b, d^{-1}bd=c,d^{-1}cd=ab \rangle$ is also Lie simple. 

Now consider the group $SL(2,3)=\langle x,y,z:x^4=z^{3}=1,x^2=y^2,yxy^{-1}=x^{-1},zxz^{-1}=y,zyz^{-1}=xy \rangle$. Then it is well known that $[\text{SL}(2,3),\text{SL}(2,3)]\cong Q_8$. Therefore order of $SL(2,3)*SL(2,3)$ can be either $1$, $2$, $4$ or $8$. Suppose order of $SL(2,3)*SL(2,3)$ is $4$, then it will be isomorphic to either $\mathbb{Z}_4$ or Klein Four-group $V_4$. Since $Q_8/Z(Q_8)$ can not be cyclic, it can not be $\mathbb{Z}_4$ and $V_4$ is also not possible as $SL(2,3)$ has only one element of order $2$. Further if $\phi:[\text{SL}(2,3),\text{SL}(2,3)]\rightarrow \text{SL}(2,3)$ is a $\text{SL}(2,3)-$ equivariant homomorphism, then $z\phi(x)z^{-1}=\phi(y)$ which shows that $\circ (\phi(x))=\circ (\phi(y))$ (here it is easy to observe that $x,y \in [\text{SL}(2,3),\text{SL}(2,3)]$). Therefore Im $\phi$ can not isomorphic to any subgroup of order $2$ which asserts that order of $SL(2,3)* SL(2,3)$ can not be $2$. Hence SL$(2,3)$ is also Lie simple. 

\end{proof}

\textbf{Acknowledgement}: We are extremely thankful to Prof. Ramji Lal for his valuable suggestions, discussions and constant support. The first named author thanks IIIT Allahabad for providing institute fellowship.

\end{document}